\documentclass[10pt,letter,reqno]{amsart}

\usepackage{setspace}
\usepackage{orcidlink}

\usepackage{tikz}
\usepackage{pgfplots}
\pgfplotsset{compat=1.15}
\usepackage{mathrsfs}
\usetikzlibrary{arrows}
\usepackage{pgfplots}

\usetikzlibrary{arrows.meta}
\usetikzlibrary{plotmarks}

\usepackage{amssymb,amsmath,amsfonts,amsthm,commath}
\usepackage{graphicx}
\usepackage{float}
\usepackage{framed}
\usepackage{calc,extarrows}
\usepackage{xcolor}

\usepackage{amsfonts}
\usepackage{amssymb}
\usepackage{graphicx}
\usepackage{newlfont}
\usepackage{mathrsfs}
\usepackage{graphics}
\usepackage{amsmath, amssymb, amsfonts,amsthm}
\usepackage{textcomp}
\usepackage{mathtools}
\usepackage{multicol}
\usepackage{bbm}
\usepackage{float}
\usepackage{enumerate}

\usepackage{hyperref}

\usepackage[numbers,sort&compress]{natbib}
\usepackage{marginnote}

%
%

%
%

\newtheorem{theorem}{Theorem}[section]
\newtheorem{cor}[theorem]{Corollary}

\newtheorem{prop}[theorem]{Proposition}
\newtheorem{definition}[theorem]{Definition}

\theoremstyle{remark}

\newcommand{\cd}{\ \Rightarrow \ }

\newcommand{\ed}{\ \stackrel{d}{=} \ }

\newcommand{\Rbold}{\mbox{${\mathbb R}$}}

\newcommand{\bE}{\mathbb{E}}
\newcommand{\bP}{\mathbb{P}}

\numberwithin{equation}{section}

\newtheorem{lemma}[theorem]{Lemma}

\newtheorem{remark}{Remark}[section]
\newcommand{\dist}{\mbox{dist}}
\newcommand{\Z}{\mathbb Z}
\newcommand{\Q}{\mathbb Q}

\newcommand{\R}{\mathbb R}

\renewcommand{\P}{\operatorname{\mathbb P}}

\DeclareMathOperator{\E}{\mathbb E}

\newcommand{\prob}{\xrightarrow{P}}

\newcommand{\disteq}{\overset{d}{=}}
\newcommand{\dd}{\mathrm{d}}

\newcommand{\Fi}{{\cal F}}

\newcommand{\Ni}{{\cal N}}

\newcommand{\Ui}{{\cal U}}

\newcommand{\Yi}{{\cal Y}}

\newcommand{\Exponential}[1]{\mathrm{Exponential}\left(#1\right)}

\begin{document}
	
	\title[Last Progeny Modified BRW]{Right-Most Position of a Last Progeny Modified Branching Random Walk}
	
	\author[Bandyopadhyay]{Antar Bandyopadhyay}
	\address[Antar Bandyopadhyay]{Theoretical Statistics and Mathematics Unit \\
		Indian Statistical Institute, Delhi \\ 
		7 S. J. S. Sansanwal Marg \\
		New Delhi 110016 \\
		INDIA}
	\address{Theoretical Statistics and Mathematics Unit, 
		Indian Statistical Institute, Kolkata
		203 B. T. Road, Kolkata 700108 \\ INDIA}
	\email{antar@isid.ac.in}     
	\author[Ghosh]{Partha Pratim Ghosh {\large\orcidlink{0000-0002-4801-4538}}}  
	\address[Partha Pratim Ghosh]{Institut f\"ur Mathematische Stochastik\\ Technische Universit\"at Braunschweig \\
	Universit\"atsplatz 2 \\ 
	38106 Braunschweig, GERMANY}         
	\email{p.pratim.10.93@gmail.com} 
	
	\date{February 14, 2025}
	
	\begin{abstract}
	 In this work, we consider a modification of the usual Branching Random Walk (BRW), where we give certain independent and identically distributed  (i.i.d.) displacements to all the particles at the $n$-th generation, which may be different from the driving increment distribution. We call this process \emph{last progeny modified branching random walk (LPM-BRW)}.
	Depending on the value of a parameter, 
	$\theta$, we classify the model into three distinct cases, namely, the boundary case, below the boundary case, and above the boundary case. Under very minimal assumptions on the underlying point process of the increments, we show that at the \emph{boundary case}, $\theta=\theta_0$, where $\theta_0$ is a parameter value associated with the displacement point process,  the maximum displacement converges to a limit after only an appropriate centering, which is of the form 	$c_1 n - c_2 \log n$. We give an explicit formula for the constants $c_1$ and $c_2$ and show that $c_1$ is exactly the same, while $c_2$ is $1/3$ of the corresponding constants of the usual BRW~\cite{Aide13}. We also characterize the limiting distribution. We further show that below the boundary, $\theta < \theta_0$, the logarithmic correction term is absent. For
above the boundary, $\theta > \theta_0$,  the logarithmic correction term is  exactly the same  as that of the classical BRW. 
For $\theta \leq \theta_0$, we further derive Brunet-Derrida -type results
of point process convergence of our LPM-BRW 
to a Poisson point process.
Our proofs are based on a novel method of coupling 
the maximum displacement with a \emph{linear statistic} associated 
with a more well-studied process 
in statistics, 
known as the \emph{smoothing transformation}.  
	\end{abstract}
	
	\keywords{Branching random walk, Bramson correction, derivative martingales, maximum operator, smoothing transformation.\\
	\textit{Published in: Journal of Theoretical Probability}, Volume 38(2): Paper No. 34 (2025).}

	\subjclass[2010]{Primary: 60F05, 60F10; Secondary: 60G50}
	
	\maketitle
	
\setlength{\parindent}{0em}
\setlength{\parskip}{1em}
	
	\section{Introduction}
	\label{Sec:Intro}
	
	\subsection{Introduction and background}
	\label{SubSec:Background}
	\emph{Branching random walk (BRW)} was introduced by Hammersley~\cite{Hamm74} in the early '70s. Over the last five decades, it has received a lot of attention from various researchers in probability theory and statistical physics. The model, as such, is very simple to describe.  
	It starts with one particle at the origin. 
	After a unit amount of time, the particle dies and gives birth to a number of similar particles, which are placed at possibly different locations on the real line $\Rbold$.  These particles at possibly different places on $\Rbold$ form the so-called first generation of the process and can be described through a point process, say $Z$ on $\Rbold$. After another unit time, each of the particles in the first generation behaves independently and identically as that of the parent, that is it dies, but before that, it produces a bunch of offspring particles which are displaced by independent copies of $Z$. The particles in generation one behave independently but identically of one another. The process then continues in the next unit of time and so on. The dynamics so produced is called a \emph{Branching random walk (BRW)}.

 Let $N:=Z(\R)$ be the offspring distribution of the underlying branching process. As will be
clear in the sequel (see Section~\ref{SubSec:Assumptions}), without loss of any generality 
throughout this article we will assume that $\P(N\geq 1)=1$. As otherwise, all of our results 
will hold when the process is supercritical and we conditioned on its survival. 
	
	Let $R_n$ denote the position of the \emph{right-most particle} in the generation
	$n$. In the seminal works,
	Hammersley~\cite{Hamm74}, 
	Kingman~\cite{King75}, and Biggins~\cite{Bigg76} 
	proved that under very minimal condition of the displacement point process $Z$, 
	\begin{equation}
		\frac{R_n}{n} \longrightarrow \gamma \mbox{\ \ a.s.},
		\label{Equ:BRW-SLLN}
	\end{equation}
	where $\gamma > 0$ is a constant associated with the displacement point process
	$Z$. 
	It is worth mentioning here that if we forget about the position of the particles and only keep count of the number of particles, then it forms a Galton-Watson branching process with progeny distribution given by 	$Z\left(\Rbold\right)$.  
	As noted in Aldous and Bandyopadhyay~\cite{AlBa05}, the arguments of
	Hammersley~\cite{Hamm74} can be used to claim that 
	if $\mbox{median}\left(R_{n+1}\right) - \mbox{median}\left(R_{n}\right)$ 
	remains bounded above, then the sequence of random variables
	$\left(R_n - \mbox{median}\left(R_{n}\right)\right)_{n \geq 0}$ remains
	tight. Similar arguments also appears in Dekking and Host \cite{DekkHo91}.
	
	From historical point-of-view, it is interesting to note here that Biggins~\cite{Bigg76} wrote: 
	\begin{quote}
		``Of course pride of place in the open problems goes to establishing more detailed results than~\eqref{Equ:BRW-SLLN} of the kinds that are already available for branching Brownian motion.''
	\end{quote}
	Indeed, McKean~\cite{McKe75} 
	showed that for similar continuous time version with 
	\emph{Branching Brownian Motion (BBM)},
	the maximum position, when centered by its median, converges weakly to a
	traveling wave solution. 
	Later Bramson~\cite{Bram78, Bram83} gave detailed order of the centering
	and showed that 
	an ``extra'' logarithmic term appears, which later was termed as the
	\emph{Bramson correction}. 
	Later Lalley and Sellke~\cite{LaSe87} gave a different probabilistic interpretation of the
	traveling wave limit through
	certain conditional limit theorem and using a new concept called the
	\emph{derivative martingales}.
	
	In a series of papers, 
	Bramson and Zeitouni~\cite{BrZe07, BrZe09} 
	showed that under fairly general
	conditions, $\left(R_n - \mbox{median}\left(R_{n}\right)\right)_{n \geq 0}$ remains
	tight. And in 2009, two groups of researchers,
	Hu and Shi~\cite{HuSh09} and Addario-Berry and Reed~\cite{AdRe09}
	independently proved that $\frac{R_n}{n}$ has a second-order fluctuation
	which was identified as $-\frac{3}{2} \log n$ in probability.  
	Finally, in 2013, A\"{i}d\'{e}kon~\cite{Aide13} proved that  $R_n-\gamma n+ \frac{3}{2} \log n$ converges in law to a randomly shifted Gumbel distribution, essentially settling the long-standing open problem of Biggins~\cite{Bigg76}. We refer to ~\cite{Shi2015} for an excellent 
	review of the classical and recent results on BRW. 
	
	In recent days more generally, it is expected that 
	this behavior for the maximum is shared by the universality class of 
	what is known as the ``\emph{log-correlated 
	fields}". We refer to \cite{Arg2017} for a detailed review of such generalization and results there in.

	In this work, we consider a modified version of the classical BRW. The modification is done at the last generation where we add \emph{i.i.d.} displacements	of a specific form. Since the modifications have been done only at the last generation, so we call this model 
	\emph{last progeny modified Branching Random Walk} or abbreviate it as
	\emph{LPM-BRW}. 
	 The model is described in more detail in the following subsection. We establish several results similar to
	 A\"{i}d\'{e}kon~\cite{Aide13} for our model and show that the  limit has the desired 
	 universality. Further work on large deviation for the same model and 
	 centered limits for a similar but inhomogeneous  displacements can be found in
	 ~\cite{Gh2022} and ~\cite{BaGh2023} respectively. 
	
	While we were preparing this manuscript Maillard and Mallein~\cite{MaiMal2021} considered a general 
	framework for characterizing the limiting distribution of what they called ``branching-type structure"
	via a fixed point of an operator 
    referred to as the \emph{branching convolution} introduced by Bertoin and Mallein 
   ~\cite{BerMal2019}	
	on the set of all 
	point processes endowed with an appropriate topology. They mentions in their paper 
	that our model is an example of 
	their general framework 
	(see fifth bullet point on page 2 of \cite{MaiMal2021}). 
	It is worth nothing here that \cite{MaiMal2021} does not provide any general proof of convergence
	after appropriate centering but gives characterization of the limit given convergence. 
	Our detailed analysis in this work provides a set of non-trivial and concrete cases where
	the result of \cite{MaiMal2021} may be applied for characterization of the limit.   
	
	\subsection{Model}
	\label{SubSec:Model}
	Let $Z=\sum_{j\geq1}\delta_{\xi_j}$ be a point process on $\R$ and $N:=Z(\R)<\infty$ a.s.  At the $0$-th generation, we start with an initial particle at the origin. At time $n\geq1$, each of the particles at generation $(n-1)$ gives birth to a random number of offspring distributed according to $N$. The offsprings are then given random displacements independently and according to a copy of the point process $Z$.

	For a particle $v$ we shall denote its generation by $|v|$, i.e., $|v| = n$ if $v$ belongs to the $n$-th generation. Let $S(v)$ denote the position of the particle $v$, which is the sum of all the displacements the particle $v$ and its ancestors have received. 
	The stochastic process 
	$\left\{S(v) \,\Big\vert\, \left\vert v \right\vert=n \,\right\}_{n \geq 0}$ is typically 
	referred to as the classical \emph{Branching Random Walk (BRW)}. The quantity of 
	interest is the maximum position, typically denoted by
	$R_n := \max_{\left\vert v \right\vert = n} S(v)$, is also the right-most position as
	discussed above.
	
	In our model, we introduce two parameters. One is a positive real number, which we denote by $\theta > 0$. The other one is a positively supported distribution, which we will denote by $\mu\in \mathcal{P}(\bar{\R}_+)$. The parameter $\theta$ should be thought of as a \emph{scaling parameter} for the extra displacement we give to each individual at the $n$-th generation. This extra displacement is as follows.  	At a generation $n \geq 1$, we give additional displacements to each of the particles at the generation $n$, which are of the form $\frac{1}{\theta} X_v := \frac{1}{\theta} \left(\log Y_v-\log E_v\right)$, where $\{Y_v\}_{|v|=n}$ are i.i.d. $\mu$, while $\{E_v\}_{|v|=n}$ are i.i.d. $\Exponential{1}$ and they are independent of each other and also of the process $\left(S(u)\right)_{\left\vert u \right\vert \leq n}$. We denote by $R_n^*(\theta, \mu)$ the maximum position of this 	\emph{last progeny modified branching random walk (LPM-BRW)}. If the parameters $\theta$ and $\mu$ are clear from the context, then we will simply write this as $R_n^*$.	A schematic of the process is given below. 
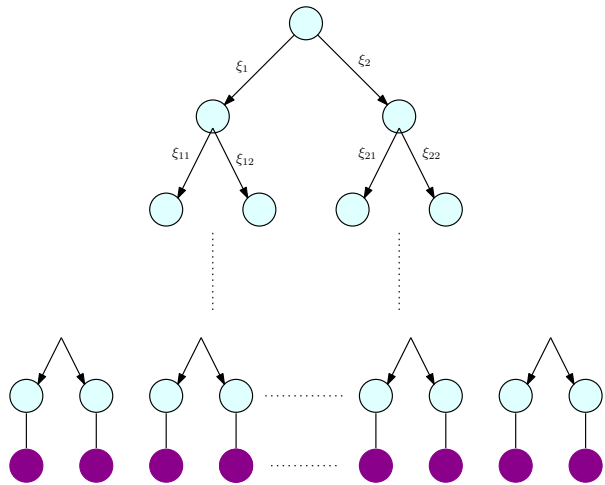
\begin{figure}[h!]
\centering
 \begin{tikzpicture}[line cap=round,line join=round,>=triangle 45,x=.3cm,y=.3cm]
\clip(-15,-24) rectangle (15,1);

\definecolor{cyan}{rgb}{0.0, 1.0, 1.0}
\definecolor{darkchestnut}{rgb}{0.6, 0.41, 0.38}

\node [circle, draw, fill=cyan, minimum size=0.45cm, inner sep=0pt, line width= 0.5pt] (V0) at (0,0){};

\node [circle, draw, fill=cyan, minimum size=0.45cm, inner sep=0pt, line width= 0.5pt] (V1) at (-4,-4){};
\node [circle, draw, fill=cyan, minimum size=0.45cm, inner sep=0pt, line width= 0.5pt] (V2) at (4,-4){};

\node [circle, draw, fill=cyan, minimum size=0.45cm, inner sep=0pt, line width= 0.5pt] (V11) at (-6,-8){};
\node [circle, draw, fill=cyan, minimum size=0.45cm, inner sep=0pt, line width= 0.5pt] (V12) at (-2,-8){};
\node [circle, draw, fill=cyan, minimum size=0.45cm, inner sep=0pt, line width= 0.5pt] (V21) at (2,-8){};
\node [circle, draw, fill=cyan, minimum size=0.45cm, inner sep=0pt, line width= 0.5pt] (V22) at (6,-8){};

    
\draw [-{Stealth[inset=0pt, length=0.15cm]},line width=   0.5pt] (V0) -- (V1) node[pos=0.4, left] {\scriptsize $\xi_{1}$}; 
\draw [-{Stealth[inset=0pt, length=0.15cm]},line width=   0.5pt] (V0) -- (V2) node[pos=0.4, right] {\scriptsize $\xi_{2}$};  

\draw [-{Stealth[inset=0pt, length=0.15cm]},line width=   0.5pt] (V1) -- (V11) node[pos=0.4, left] {\scriptsize $\xi_{11}$}; 
\draw [-{Stealth[inset=0pt, length=0.15cm]},line width=   0.5pt] (V1) -- (V12) node[pos=0.4, right] {\scriptsize $\xi_{12}$};  
\draw [-{Stealth[inset=0pt, length=0.15cm]},line width=   0.5pt] (V2) -- (V21) node[pos=0.4, left] {\scriptsize $\xi_{21}$}; 
\draw [-{Stealth[inset=0pt, length=0.15cm]},line width=   0.5pt] (V2) -- (V22) node[pos=0.4, right] {\scriptsize $\xi_{22}$};


\draw [dashed,line width=   0.5pt] (-2.25,-19) -- (2.25,-19);
\draw [dashed,line width=   0.5pt] (-2.25,-23) -- (2.25,-23);
\draw [dashed,line width=   0.5pt] (-4,-10) -- (-4,-13.5);
\draw [dashed,line width=   0.5pt] (4,-10) -- (4,-13.5);

\node [circle, draw, fill=cyan, minimum size=0.45cm, inner sep=0pt, line width= 0.5pt] (U1) at (-12.5,-15){};
\node [circle, draw, fill=cyan, minimum size=0.45cm, inner sep=0pt, line width= 0.5pt] (U2) at (-5.5,-15){};
\node [circle, draw, fill=cyan, minimum size=0.45cm, inner sep=0pt, line width= 0.5pt] (U3) at (5.5,-15){};
\node [circle, draw, fill=cyan, minimum size=0.45cm, inner sep=0pt, line width= 0.5pt] (U4) at (12.5,-15){};

\node [circle, draw, fill=cyan, minimum size=0.45cm, inner sep=0pt, line width= 0.5pt] (U11) at (-14,-19){};
\node [circle, draw, fill=cyan, minimum size=0.45cm, inner sep=0pt, line width= 0.5pt] (U12) at (-11,-19){};
\node [circle, draw, fill=cyan, minimum size=0.45cm, inner sep=0pt, line width= 0.5pt] (U21) at (-7,-19){};
\node [circle, draw, fill=cyan, minimum size=0.45cm, inner sep=0pt, line width= 0.5pt] (U22) at (-4,-19){};
\node [circle, draw, fill=cyan, minimum size=0.45cm, inner sep=0pt, line width= 0.5pt] (U31) at (4,-19){};
\node [circle, draw, fill=cyan, minimum size=0.45cm, inner sep=0pt, line width= 0.5pt] (U32) at (7,-19){};
\node [circle, draw, fill=cyan, minimum size=0.45cm, inner sep=0pt, line width= 0.5pt] (U41) at (11,-19){};
\node [circle, draw, fill=cyan, minimum size=0.45cm, inner sep=0pt, line width= 0.5pt] (U42) at (14,-19){};

\node [circle, draw, fill=darkchestnut, minimum size=0.45cm, inner sep=0pt, line width= 0.5pt] (W11) at (-14,-23){};
\node [circle, draw, fill=darkchestnut, minimum size=0.45cm, inner sep=0pt, line width= 0.5pt] (W12) at (-11,-23){};
\node [circle, draw, fill=darkchestnut, minimum size=0.45cm, inner sep=0pt, line width= 0.5pt] (W21) at (-7,-23){};
\node [circle, draw, fill=darkchestnut, minimum size=0.45cm, inner sep=0pt, line width= 0.5pt] (W22) at (-4,-23){};
\node [circle, draw, fill=darkchestnut, minimum size=0.45cm, inner sep=0pt, line width= 0.5pt] (W31) at (4,-23){};
\node [circle, draw, fill=darkchestnut, minimum size=0.45cm, inner sep=0pt, line width= 0.5pt] (W32) at (7,-23){};
\node [circle, draw, fill=darkchestnut, minimum size=0.45cm, inner sep=0pt, line width= 0.5pt] (W41) at (11,-23){};
\node [circle, draw, fill=darkchestnut, minimum size=0.45cm, inner sep=0pt, line width= 0.5pt] (W42) at (14,-23){};

    
\draw [-{Stealth[inset=0pt, length=0.15cm]},line width=   0.5pt] (U1) -- (U11);
\draw [-{Stealth[inset=0pt, length=0.15cm]},line width=   0.5pt] (U1) -- (U12);
\draw [-{Stealth[inset=0pt, length=0.15cm]},line width=   0.5pt] (U2) -- (U21);
\draw [-{Stealth[inset=0pt, length=0.15cm]},line width=   0.5pt] (U2) -- (U22);
\draw [-{Stealth[inset=0pt, length=0.15cm]},line width=   0.5pt] (U3) -- (U31);
\draw [-{Stealth[inset=0pt, length=0.15cm]},line width=   0.5pt] (U3) -- (U32);
\draw [-{Stealth[inset=0pt, length=0.15cm]},line width=   0.5pt] (U4) -- (U41);
\draw [-{Stealth[inset=0pt, length=0.15cm]},line width=   0.5pt] (U4) -- (U42);

\draw [-,line width=   0.5pt] (U11) -- (W11);
\draw [-,line width=   0.5pt] (U12) -- (W12);
\draw [-,line width=   0.5pt] (U21) -- (W21);
\draw [-,line width=   0.5pt] (U22) -- (W22);
\draw [-,line width=   0.5pt] (U31) -- (W31);
\draw [-,line width=   0.5pt] (U32) -- (W32);
\draw [-,line width=   0.5pt] (U41) -- (W41);
\draw [-,line width=   0.5pt] (U42) -- (W42);

\end{tikzpicture}
\caption{Last progeny modified branching random walk (LPM-BRW)}
\label{Fig:LPM-BRW}
\end{figure}

	\subsection{Assumptions}
	\label{SubSec:Assumptions}
	Before we state our assumptions, we introduce the following important quantities. 
	For a point process $Z = \mathop{\sum}\limits_{j=1}^{N} \delta_{\xi_j}$, we
	will write
	\[
	m\left(\theta\right) : = 
	\bE\left[\int_{{\mathbb R}}\! e^{\theta x} Z\left(\dd x\right)\right] =
	\bE\left[\sum_{j=1}^N e^{\theta \xi_j}\right],
	\]
	where $\theta \in \Rbold$, whenever the expectation exists. Naturally, $m$ is the 
	\emph{moment generating function} of the point process $Z$.
	Further, we define $\nu\left(t\right) := \log m\left(t\right)$ for 
	$t \in \Rbold$, whenever $m\left(t\right)$ is defined. 
	
	We now state our main assumptions. Throughout this paper, we will assume the following three conditions hold:
	\begin{itemize}
		\item[{\bf (A1)}]
		$m\left(\theta\right)<\infty$ 
		for all $\theta \in(-\vartheta,\infty)$ for some $\vartheta>0$.\\
		
		\item[{\bf (A2)}] 
		The point process $Z$ is \emph{non-trivial},  
		and the \emph{extinction probability} of 
		the underlying \emph{branching process} is $0$. In other words,  
		$\bP(N=1)<1$, $\bP( Z(\{t\})=N )<1$ for any $t\in\Rbold$,  
		and $\bP(N\geq1)=1 $.\\
		
		\item[{\bf (A3)}]
		$N$ has finite $(1+p)$-th moment for some $p>0$.
	\end{itemize}
	
	\begin{remark}
		{\bf (A1)} implies that $m$ is infinitely differentiable on $(-\vartheta,\infty)$. Together with {\bf (A3)}, it also implies that there exists $q>0$, such that, 
		for all $\theta\in[0,\infty)$,
		\begin{equation}
			\E\left[\left(\int_{\R} e^{\theta x}\,Z(\dd x) \right)^{1+q}\right]<\infty.\label{asmp_rem}
		\end{equation}
		Proof of this is given in the appendix (see Proposition~\ref{Prop:Equ-1.2}).
		
		Notice also that under Assumptions {\bf (A1)} and {\bf (A2)}, $\nu(t)$ is strictly convex in $\left(-\vartheta,\infty \right) $. Though this is a
		well-known fact, we are unable to find an exact reference for this. So a proof of this has been given in the Appendix as Proposition~\ref{Prop:Convexity_of_nu}.
	\end{remark}

	\subsection{Motivation}
	\label{SubSec:Motivation}
	Our main motivation to study this new LPM-BRW model 
	is what we will see in the sequel that, there is a nice \emph{coupling} of $R_n^*$ 
	with a \emph{linear statistic}, which is an additive martingale associated 
	with BRW (see Corollary~\ref{Thm:Coupling} for details). For such statistics, 
	asymptotics can be computed using various martingale techniques, some of
	which are known. This novel connection is indeed the reason
	the model intrigued us. As illustrated in this article, 
    our model is one example where this coupling technique works.	
	This connection is  novel and 
	we believe that 
    it has potential of many more applications.
	
	The other motivation and perhaps more straightforward one, is to
	be able to compare our results with the existing ones 
    in the context of the classical BRW
    (such as, asymptotics derived in~\cite{Aide13}).
    We see a difference appears in the constant factor in front of the
    Bramson correction (see Theorem~\ref{Thm:Boundary-Case}), but 
    the final weak limit remains the same. 
    This in turn shows that the centered asymptotic results are 
    heavily dependent on the displacements given at the end nodes, but 
    not the limit. While doing this comparison, 
    we also have been able to get the exact 
    constant for the centered limit 
    which was earlier not known
    (see Remark~\ref{Rem:Exact-Constant} for the details).

	\subsection{Outline}
	\label{SubSec:Outline}
	In Section~\ref{Sec:Results}, we state the main results. 
	Section~\ref{Sec:Coupling} provides our main tool: the
	coupling between the maximum statistic and a linear statistic.
	In Section~\ref{Sec:Auxilliary-Results}, we state and prove a few 
	asymptotic results about the associated \emph{linear statistic}, 
	which we later use in the proofs of the main results. 
	We end with Section~\ref{Sec:Proofs}, where we give all the details of the proofs.
	For the sake of 
	completeness, proofs of a few
	elementary results are provided in the Appendix.

	\section{Main Results}
	\label{Sec:Results}
	We start by defining a constant related to the underlying driving point process $Z$,
	which we denote by $\theta_0$. Let
	\[
	\theta_0:=\inf\left\{ \theta>0 : \frac{\nu(\theta)}{\theta}= \nu'(\theta) \right\}.
	\] 
	The fact that $\nu(\theta)$ is strictly convex ensures that the above set is at most singleton. If it is a singleton, then as illustrated in Figure~\ref{Fig:nu-theta_0}, $\theta_0$ is the  unique point in $(0,\infty)$ such that a  tangent line  from the origin to the graph of $\nu(\theta)$ touches the graph at $\theta=\theta_0$. And if it is empty, then by definition $\theta_0$ takes value $\infty$, and there does not exist any  tangent line  from the origin to the graph of $\nu(\theta)$ on the right half-plane. 
\begin{figure}[h!]
\begin{minipage}{.495\textwidth}
\centering
  \begin{tikzpicture}[line cap=round,line join=round,>=triangle 45,x=1.6cm,y=1.6cm]
\clip(-0.35,-0.35) rectangle (3.2,4);

\draw [{Stealth[inset=0pt, length=0.15cm]}-{Stealth[inset=0pt, length=0.15cm]},line width=   0.4pt] (-0.35,0) -- (3,0) node[right] {\scriptsize $x$};
    \draw [{Stealth[inset=0pt, length=0.15cm]}-{Stealth[inset=0pt, length=0.15cm]},line width=   0.4pt] (0,-0.35) -- (0,3.75) node[above] {\scriptsize $y$};
    
    \def\const{sqrt(2*ln(2))} 
    
    \draw[domain=0:2.4, smooth, line width=   0.4pt,  variable=\x] 
        plot ({\x}, {(0.5)*\x*\x + ln(2)}) ;
    \draw[domain=0:2.4, smooth, line width=   0.4pt, variable=\x] 
        plot ({\x}, {(\const)*\x}) ;
        
        \draw [dashed,line width=   0.4pt] (1.1774100225,0) -- (1.1774100225,1.3862943611);
        
   \begin{scriptsize}
   \node at (2,2.05) [ rotate=49.6580300383] {$y=\frac{\nu(\theta_0)}{\theta_0}x$};
     \node at (1.9,2.9) [ rotate=49.6580300383] {$y=\nu(x)$};
\draw[color=black] (-.1,-.1) node {$O$};
\draw[color=black] (1.1774100225,-.1) node {$x=\theta_0$};
\draw [fill=black] (1.1774100225,1.3862943611) circle (1pt);
\end{scriptsize}     
\end{tikzpicture}
\caption{Illustration of  $\theta_0$}
\label{Fig:nu-theta_0}
\end{minipage}
\begin{minipage}{.495\textwidth}
\centering
 \begin{tikzpicture}[line cap=round,line join=round,>=triangle 45,x=1.6cm,y=1.6cm]
\clip(-0.35,-0.35) rectangle (3.2,4);

\draw [{Stealth[inset=0pt, length=0.15cm]}-{Stealth[inset=0pt, length=0.15cm]},line width=   0.4pt] (-0.35,0) -- (3,0) node[right] {\scriptsize $x$} ;
    \draw [{Stealth[inset=0pt, length=0.15cm]}-{Stealth[inset=0pt, length=0.15cm]},line width=   0.4pt] (0,-0.35) -- (0,3.75) node[above] {\scriptsize $y$};
    
    \draw[domain=0.2:3, smooth, line width=   0.4pt,  variable=\x] 
        plot ({\x}, {(0.5)*(\x) + (ln(2))*(1/(\x))}) ;

        \draw [dashed,line width=   0.4pt] (1.1774100225,0) -- (1.1774100225,1.1774100225);
        
   \begin{scriptsize}
   \node at (2.5,1.75) [ rotate=20] {$y=\frac{\nu(x)}{x}$};
   \draw[color=black] (-.1,-.1) node {$O$};
\draw[color=black] (1.1774100225,-.1) node {$x=\theta_0$};
\draw [fill=black] (1.1774100225, 1.1774100225) circle (1pt);
\end{scriptsize}     
\end{tikzpicture}
\caption{Graph of $y = \nu(x)/x$}
\label{Fig:nu-theta_by_theta}
\end{minipage}
\end{figure}
\begin{remark}
\label{Rem:nutheta/theta}
It is worth noting that $\nu(\theta)/\theta$ is strictly  decreasing for $\theta\in(0,\theta_0)$ and strictly increasing for $\theta\in(\theta_0,\infty)$. Therefore, as shown in Figure~\ref{Fig:nu-theta_by_theta}, when $\theta_0$ is finite, it is the unique point of minimum for $\nu(\theta)/\theta$. 
\end{remark}

\begin{remark}
 \label{Rem:Free-Energy}
Note that 
\[
\frac{\nu(\theta)}{\theta}=\lim_{n\rightarrow\infty} \frac{1}{n\theta}\log\E\left[ W_n(\theta) \right],
\]
where $W_n(\theta)=W_n(\theta,0)$ is as defined in~\eqref{Equ:Def-W_n}. The quantity  $\nu(\theta)/\theta$ is often referred to as the \emph{``annealed free energy''}. The so-called \emph{``quenched free energy''}, denoted by $F(\theta)$, can be defined as
\[
 F(\theta):=\lim_{n\rightarrow\infty} \frac{1}{n\theta}\E\left[ \log W_n(\theta) \right].
 \]
Using Jensen's inequality, it is easy to see that they satisfy the  inequality
\[
F(\theta) \leq \frac{\nu(\theta)}{\theta}.
\]
\end{remark}

Whether $\theta_0$ is finite or infinite can be characterized by the fact that $\theta_0<\infty$, if and only if, 
	\[
	\lim_{\theta\rightarrow\infty} \left(\nu(\theta)-\theta\left(\lim_{x\rightarrow\infty}\nu'(x)\right)\right)<0.\\
	\]
	
In the sequel we will see that $\theta_0$ will be a point of \emph{phase transition} for our process. 
Indeed, it may be viewed as the \emph{critical inverse temperature} of the model, as it minimizes the 
limiting ``\emph{free energy}" (see Remark ~\ref{Rem:Free-Energy}).    	
 We thus classify our model into three different classes depending on the parameter
	$\theta$ is \emph{below}, \emph{equal}, or \emph{above} the quantity
	$\theta_0$. We term these as \emph{below the boundary case (BBC)}, 
	\emph{the boundary case (BC)}, and \emph{above the boundary case (ABC)}, respectively, rather than
	\emph{sub-critical}, \emph{critical} and \emph{super-critical}. We adopt to this terminology following
	Biggins and Kyprianou \cite{BiKy05} because our $\theta = \theta_0$ corresponds to what they call the 
	\emph{boundary case}.

	\subsection{Almost sure asymptotic limit}
	\label{SubSec:SLLN}
	Our first result is a \emph{strong law of large number}-type result, which is 
	similar to~\eqref{Equ:BRW-SLLN}.
	\begin{theorem}
		\label{Thm:SLLN}
		For every non-negatively supported probability $\mu\neq\delta_0$ that admits a 
		finite mean, almost surely
		\begin{equation}
			\frac{R_n^*(\theta,\mu)}{n}\rightarrow
			\begin{cases}
			\frac{\nu(\theta)}{\theta} &  \text{if }  \theta < \theta_0 \leq \infty; \\[0.25cm]
				\frac{\nu(\theta_0)}{\theta_0} &  \text{if }  \theta_0 \leq  \theta < \infty.
			\end{cases}
			\label{Equ:SLLN}
		\end{equation}
	\end{theorem}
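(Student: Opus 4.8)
The plan is to sandwich $R_n^*(\theta,\mu)/n$ between matching a.s.\ $\limsup$ and $\liminf$ bounds. Write $\ell$ for the claimed limit, so $\ell=\nu(\theta)/\theta$ when $\theta<\theta_0$ and $\ell=\nu(\theta_0)/\theta_0$ when $\theta\ge\theta_0$; using the strict convexity of $\nu$ from the Remark, in either case $\ell=\inf_{0<t<\theta}\nu(t)/t$ (attained at $\theta_0$ when $\theta>\theta_0$, and equal to the non-attained limit $\lim_{t\uparrow\theta}\nu(t)/t$ when $\theta\le\theta_0$). The workhorse is the coupling of Section~\ref{Sec:Coupling}: since $\theta R_n^*(\theta,\mu)=\max_{|v|=n}\log\!\big(Y_v e^{\theta S(v)}/E_v\big)$, and for nonnegative weights $(a_v)$ one has $\min_v(E_v/a_v)\sim\mathrm{Exp}\!\big(\sum_v a_v\big)$ when the $E_v$ are i.i.d.\ $\mathrm{Exp}(1)$, hence $\max_v a_v/E_v$ has the law of $\big(\sum_v a_v\big)/\tilde E$ with $\tilde E\sim\mathrm{Exp}(1)$, one obtains
\[
\theta R_n^*(\theta,\mu)\;=\;\log\tilde W_n(\theta)-\log\tilde E_n,\qquad \tilde W_n(\theta):=\sum_{|v|=n}Y_v\,e^{\theta S(v)},\quad \tilde E_n\sim\mathrm{Exp}(1).
\]
Since $\P(|\log\tilde E_n|>\eps n)\le e^{-e^{\eps n}}+e^{-\eps n}$ is summable, Borel--Cantelli gives $n^{-1}\log\tilde E_n\to0$ a.s., so the theorem reduces to identifying the a.s.\ exponential growth rate of the \emph{linear statistic} $\tilde W_n(\theta)$, exactly the type of statement isolated in Section~\ref{Sec:Auxilliary-Results}.

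For the upper bound $\limsup_n R_n^*/n\le\ell$ a.s.\ I would argue directly by a first--moment (many--to--one) estimate on $R_n^*$, treating all $\theta$ uniformly. For $a>\ell$, writing $\hat S_n$ for the spine walk (increment m.g.f.\ $m(t)/m(0)$) carrying an independent copy $\hat X=\log\hat Y-\log\hat E$ of the extra displacement,
\[
\P\big(R_n^*(\theta,\mu)>an\big)\le\E\,\#\{|v|=n:S(v)+\tfrac1\theta X_v>an\}=(\E N)^n\,\P\big(\hat S_n+\tfrac1\theta\hat X>an\big),
\]
and a Chernoff bound at parameter $t\in(0,\theta)$ bounds the right side by $\E[\hat Y^{t/\theta}]\,\Gamma(1-t/\theta)\,e^{-n(ta-\nu(t))}$, finite because $t/\theta<1$ keeps $\Gamma$ finite and the finite mean of $\mu$ keeps $\E[\hat Y^{t/\theta}]$ finite. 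Since $\inf_{0<t<\theta}\nu(t)/t=\ell<a$, one may pick such a $t$ with $\nu(t)/t<a$, making the bound summable; Borel--Cantelli and $a\downarrow\ell$ conclude. (For $\theta<\theta_0$ one could instead use $\E\tilde W_n(\theta)=\E[Y]\,m(\theta)^n$ with Markov to get $n^{-1}\log\tilde W_n(\theta)\le\nu(\theta)$ a.s., then invoke the coupling; the many--to--one route is what one needs for $\theta\ge\theta_0$, where first moments overshoot $\ell$.)

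For the lower bound the two regimes need genuinely different arguments. When $\theta<\theta_0$ I would go through the coupling: fix $\eta>0$ with $\P(Y\ge\eta)>0$, so $\tilde W_n(\theta)\ge\eta\sum_{|v|=n}e^{\theta S(v)}\1[Y_v\ge\eta]$. The normalized additive martingale $W_n(\theta)=m(\theta)^{-n}\sum_{|v|=n}e^{\theta S(v)}$ converges a.s.\ to a strictly positive limit precisely because $\theta<\theta_0$ forces $\theta\nu'(\theta)<\nu(\theta)$, together with the moment bound~\eqref{asmp_rem}; moreover its largest summand is $e^{-n(\nu(\theta)-\theta\max_{|v|=n}S(v)/n)}\to0$ a.s.\ exponentially, since $\max_{|v|=n}S(v)/n\to\nu(\theta_0)/\theta_0<\nu(\theta)/\theta$. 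A law of large numbers for weighted sums with exponentially vanishing maximal weight and independent, bounded, mean-$\P(Y\ge\eta)$ multipliers $\1[Y_v\ge\eta]$ (a conditional $L^2$ estimate plus Chebyshev--Borel--Cantelli) then gives $\liminf_n n^{-1}\log\tilde W_n(\theta)\ge\nu(\theta)$ a.s., hence $\liminf_n R_n^*/n\ge\nu(\theta)/\theta$. When $\theta\ge\theta_0$ (so $\theta_0<\infty$ and $\gamma:=\nu(\theta_0)/\theta_0$ is the classical speed of~\eqref{Equ:BRW-SLLN}) the coupling is not needed: for $\delta>0$ small, an additive--martingale argument comparing two parameters in $(0,\theta_0)$ close to $\theta_0$ shows $\#\{|v|=n:S(v)\ge(\gamma-\delta)n\}\ge e^{cn}$, $c>0$, eventually a.s., and a Borel--Cantelli argument then produces some such $v$ with in addition $Y_v\ge\eta$ and $E_v\le1$, whence $R_n^*\ge S(v)+\tfrac1\theta\log\eta\ge(\gamma-\delta)n+\tfrac1\theta\log\eta$; letting $\delta\downarrow0$ gives $\liminf_n R_n^*/n\ge\gamma$ a.s. Combining the two sides in each regime yields~\eqref{Equ:SLLN}.

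The main obstacle is the lower bound, i.e.\ ruling out that $\tilde W_n(\theta)$ is exponentially small: this is the a.s.\ strict positivity of the (thinned) additive--martingale limit when $\theta<\theta_0$, and the a.s.\ exponential lower bound on the number of near--extremal BRW particles when $\theta\ge\theta_0$. Both rest on the strict convexity of $\nu$ and on assumptions (A1), (A3) via~\eqref{asmp_rem}, and both are most naturally packaged among the auxiliary results on the linear statistic; the truncation $Y_v\mapsto\1[Y_v\ge\eta]$ is what lets the weighted law of large numbers go through under only the finite--mean hypothesis on $\mu$.
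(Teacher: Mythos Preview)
Your proposal is correct and the upper bound is essentially the same first-moment/Chernoff argument the paper runs (they apply Markov to $e^{\beta R_n^*/2}$ with $\beta=\min(\theta,\theta_0)$ and then the coupling, which amounts to your Chernoff at $t=\beta/2\in(0,\theta)$). The lower bounds, however, go by genuinely different routes.

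For $\theta<\theta_0$ you argue directly on the linear statistic: truncate $Y_v\ge\eta$, do a conditional $L^2$/Chebyshev estimate on $\sum_v w_v\mathbf 1[Y_v\ge\eta]$ with weights $w_v=e^{\theta S(v)}/W_n(\theta)$, and use the exponential decay of $\max_v w_v$ (which imports the classical BRW speed $R_n/n\to\gamma$ together with the a.s.\ positivity of the additive-martingale limit). The paper instead first obtains $R_n^*/n\to\nu(\theta)/\theta$ \emph{in probability} from its auxiliary results $Y_n^\mu(\theta)/W_n(\theta)\to\langle\mu\rangle$ and $n^{-1}\log W_n(\theta)\to\nu(\theta)$, and then amplifies this to summable lower-tail bounds by a branching self-similarity trick at level $[\sqrt{n}]$: writing $R_n^*\ge S(\tilde u_{[\sqrt n]})+\max_{|u|=[\sqrt n]}R_{n-[\sqrt n]}^{*(u)}$, the i.i.d.\ structure raises the in-probability lower-tail bound to the power $N_{[\sqrt n]}$, making it summable.

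For $\theta>\theta_0$ you invoke the classical Biggins-type count $\#\{|v|=n:S(v)\ge(\gamma-\delta)n\}\ge e^{cn}$ and then find one such $v$ with $Y_v\ge\eta$, $E_v\le1$. The paper avoids this entirely via the algebraic inequality $Y_n^\mu(\theta+\theta_0)\le W_n(\theta_0)\,e^{\max_{|v|=n}(\theta S(v)+\log Y_v)}$, which turns the lower bound for $R_n^*$ into a comparison of the a.s.\ growth rates of $\log Y_n^\mu(\theta+\theta_0)$ and $\log W_n(\theta_0)$; these in turn come from a convexity lemma applied to the random set $\{(a,b):\limsup_n W_n(a,b)<\infty\}$.

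In short: your argument is modular and leans on well-known classical BRW inputs (the speed $R_n/n\to\gamma$ and the near-extremal count), which is perfectly legitimate and arguably more transparent; the paper's argument stays entirely within its own linear-statistic framework and is self-contained, at the cost of the $[\sqrt n]$-branching trick and the convexity lemma.
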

	
	\begin{remark}
		Note that the almost sure limit remains the same as 
		$\frac{\nu(\theta_0)}{\theta_0}$ for both the BC and the ABC.
	\end{remark}
	
	\subsection{Centered asymptotic limits}
	\label{SubSec:Centered-Asymptotic-Limits}
	The centered asymptotic limits vary  in the three different cases depending on
	the value of the parameter $\theta$ as described above. We thus state the 
	results separately for the three cases.

	\subsubsection{The Boundary case ($\theta = \theta_0 < \infty$)}
	\label{SubSubSec:Boundary-Case-Results}
	\begin{theorem}
		\label{Thm:Boundary-Case}
		Assume that $\mu$ admits a finite mean, then
		there exists a random variable $H_{\theta_0}^{\infty}$, which 
		may depend on $\theta_0$,  such that,
		\begin{equation}
			R_n^* - \frac{\nu\left(\theta_0\right)}{\theta_0} n + \frac{1}{2 \theta_0} 
			\log n 
			\cd H_{\theta_0}^{\infty} + \frac{1}{\theta_0}\log \langle\mu\rangle,
			\label{Equ:Boundary-Limit}
		\end{equation}
		where $\langle\mu\rangle$ is the mean of $\mu$.

	\end{theorem}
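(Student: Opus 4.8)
\emph{Proof proposal.} The plan is to use the coupling of Section~\ref{Sec:Coupling} to rewrite $R_n^*$ exactly in terms of a $\mu$-weighted version of the additive martingale of the driving branching random walk, and then to feed in the Seneta--Heyde normalization of that martingale at the boundary.

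\textbf{The coupling.} Fix $\theta=\theta_0$ and condition on $\mathcal F_n$ (the branching random walk up to generation $n$) together with $\{Y_v\}_{|v|=n}$. Since the extra displacements are $\tfrac{1}{\theta_0}(\log Y_v-\log E_v)$ with $E_v$ i.i.d.\ $\mathrm{Exp}(1)$ and independent of everything else, for every $y\in\R$,
\[
\P\!\Big(\theta_0 R_n^*\le y\,\Big|\,\mathcal F_n,\{Y_v\}\Big)=\prod_{|v|=n}\P\big(E_v\ge Y_v\,e^{\theta_0 S(v)-y}\big)=\exp\!\Big(-e^{-y}\!\!\sum_{|v|=n}\!Y_v\,e^{\theta_0 S(v)}\Big).
\]
Setting $W_n^{\mu}(\theta_0):=\sum_{|v|=n}Y_v\,e^{\theta_0 S(v)-n\nu(\theta_0)}$, taking expectations and matching Laplace transforms gives, for each $n$, the identity in law
\[
\theta_0 R_n^*\ \overset{d}{=}\ n\,\nu(\theta_0)+\log W_n^{\mu}(\theta_0)-\log\mathcal E_n,
\]
with $\mathcal E_n\sim\mathrm{Exp}(1)$ independent of $W_n^{\mu}(\theta_0)$ (on an enlarged space this is an almost sure coupling, but only the identity in law is needed).

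\textbf{Reduction and conclusion.} Dividing by $\theta_0$ and subtracting the centering, the identity above reads
\[
R_n^*-\frac{\nu(\theta_0)}{\theta_0}\,n+\frac{1}{2\theta_0}\log n\ \overset{d}{=}\ \frac{1}{\theta_0}\log\!\big(\sqrt n\,W_n^{\mu}(\theta_0)\big)-\frac{1}{\theta_0}\log\mathcal E_n .
\]
It therefore suffices to prove $\sqrt n\,W_n^{\mu}(\theta_0)\to\langle\mu\rangle\,\kappa\,D_\infty$ in probability, where $D_\infty\in(0,\infty)$ a.s.\ is the limit of the derivative martingale of the driving branching random walk at $\theta_0$ (positivity guaranteed by (A2)--(A3)) and $\kappa=\kappa(\theta_0)\in(0,\infty)$ is the Seneta--Heyde constant (explicitly $\sqrt{2/(\pi\sigma_0^2)}$ with $\sigma_0^2=\theta_0^2\nu''(\theta_0)>0$, the precise value being immaterial for the statement). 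Granting this, since $\langle\mu\rangle\in(0,\infty)$ and $\log$ is continuous on $(0,\infty)$, the random variables $X_n:=\tfrac1{\theta_0}\log(\sqrt n\,W_n^{\mu}(\theta_0))$ converge in probability to $X_\infty:=\tfrac1{\theta_0}\log(\langle\mu\rangle\kappa D_\infty)$; by independence of $\mathcal E_n$ from $X_n$ one gets $(X_n,\mathcal E_n)\Rightarrow(X_\infty,\mathcal E)$ with $\mathcal E$ independent of $X_\infty$, hence by the continuous mapping theorem
\[
R_n^*-\frac{\nu(\theta_0)}{\theta_0}\,n+\frac{1}{2\theta_0}\log n\ \cd\ \frac{1}{\theta_0}\big(\log\kappa+\log D_\infty-\log\mathcal E\big)+\frac{1}{\theta_0}\log\langle\mu\rangle .
\]
This is the assertion, with $H_{\theta_0}^{\infty}:=\tfrac{1}{\theta_0}\big(\log\kappa+\log D_\infty-\log\mathcal E\big)$; note that $H_{\theta_0}^{\infty}$ does not depend on $\mu$, and that $-\log\mathcal E$ is a standard Gumbel, so the limit is a randomly shifted Gumbel, as in A\"{i}d\'{e}kon's theorem for the classical BRW.

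\textbf{Asymptotics of the weighted martingale (Section~\ref{Sec:Auxilliary-Results}).} Write $a_v:=\sqrt n\,e^{\theta_0 S(v)-n\nu(\theta_0)}$, so $\sqrt n\,W_n^{\mu}(\theta_0)=\sum_{|v|=n}a_v(Y_v-\langle\mu\rangle)+\langle\mu\rangle\sum_{|v|=n}a_v$. The second sum is $\langle\mu\rangle\sqrt n\,W_n(\theta_0)$, which converges in probability to $\langle\mu\rangle\kappa D_\infty$ by the Seneta--Heyde scaling of the critical additive martingale. For the first sum, condition on $\mathcal F_n$: it is a sum of independent centered variables whose coefficients satisfy $\sum_v a_v=\sqrt n\,W_n(\theta_0)$ (tight) and $\max_v a_v=\sqrt n\,e^{\theta_0 R_n-n\nu(\theta_0)}\to0$ in probability — the latter because the relation $\theta_0\nu'(\theta_0)=\nu(\theta_0)$ together with the standard upper bound $R_n\le\nu'(\theta_0)n-\tfrac{1}{\theta_0}\log n$ (which holds with probability tending to $1$, crude compared with the true order $-\tfrac{3}{2\theta_0}\log n$) forces $\max_v a_v\le n^{-1/2}$. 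Truncating $Y_v=(Y_v\wedge M)+(Y_v-M)^+$: the bounded part has conditional variance $\le M^2\big(\max_v a_v\big)\big(\sum_v a_v\big)\to0$ in probability, while the tail part has conditional $L^1$-norm $\le 2\,\bE[(Y-M)^+]\sum_v a_v$, uniformly small as $M\to\infty$ because $\mu$ has finite mean; letting $M\to\infty$ shows the first sum $\to0$ in probability, as required.

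\textbf{The main obstacle.} The substantial content is the weighted-martingale asymptotic just sketched, and within it two points: (i) establishing the Seneta--Heyde normalization $\sqrt n\,W_n(\theta_0)\to\kappa D_\infty$ in probability, with $D_\infty\in(0,\infty)$ a.s., under the minimal hypotheses (A1)--(A3) — this is where strict convexity of $\nu$ at $\theta_0$ and the $(1+p)$-th moment of $N$ enter, to control the spine change of measure and to discard particles far from the diagonal; and (ii) transferring it to the $\mu$-weighted statistic when $\mu$ is only assumed to have a finite mean, so that the second-moment method is unavailable and one must instead run the truncation argument above, which itself relies on an a priori upper bound for $R_n$. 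Everything else — the coupling identity, the extraction of $H_{\theta_0}^{\infty}$, and the passage to the limit — is routine.
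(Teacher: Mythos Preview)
Your proof is correct and follows the same architecture as the paper: use the coupling identity (Theorem~\ref{Thm:Coupling}) to write $\theta_0 R_n^*\overset{d}{=}\log Y_n^{\mu}(\theta_0)-\log E$, invoke the A\"{i}d\'{e}kon--Shi Seneta--Heyde normalization $\sqrt{n}\,W_n(\theta_0,\nu(\theta_0))\to\sqrt{2/(\pi\sigma^2)}\,D_{\theta_0}^{\infty}$ in probability, and reduce to the weighted weak law $Y_n^{\mu}(\theta_0)/W_n(\theta_0)\to\langle\mu\rangle$ in probability. The only difference is in the execution of this last step: the paper quotes a conditional weak-law lemma of Biggins--Kyprianou (a special case of Kurtz), which requires only $M_n(\theta_0):=e^{\theta_0 R_n}/W_n(\theta_0)\to 0$ and is established via the bound $M_n(\theta_0)\le [W_n(\theta_2)]^{\theta_0/\theta_2}/W_n(\theta_0)$ for some $\theta_2>\theta_0$ together with Hu--Shi's result~\eqref{hushi}; you instead run a direct truncation, for which you need the equivalent input $\sqrt{n}\,e^{\theta_0 R_n-n\nu(\theta_0)}\to 0$, obtained from (the upper-bound half of) the same Hu--Shi asymptotic. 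Both routes rely on exactly the same two external inputs (A\"{i}d\'{e}kon--Shi and Hu--Shi), so the difference is cosmetic.
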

	
	\begin{remark}
		Notice that the coefficient for the linear term, which is $\nu(\theta_0)/\theta_0$, is exactly the same as that of the centering of $R_n$, as proved by A\"{i}d\'{e}kon~\cite{Aide13}. However, the coefficient for the logarithmic term is $1/3$-rd of that of the centering of $R_n$, as shown by A\"{i}d\'{e}kon~\cite{Aide13}. The limiting distribution is also similar to that obtained by A\"{i}d\'{e}kon~\cite{Aide13}, which is a randomly shifted \emph{Gumbel distribution}.
    \end{remark}
	
	In fact, as we will see from the proof of the above theorem we also have the following result
	(see Section~\ref{Sec:Proofs}):
		
	\begin{theorem}
	\label{Thm:Boundary-Case-Version-II}
	Assume that $\mu$ admits a finite mean. Let 
		\begin{equation}
		\hat{H}_{\theta_0}^{\infty} 
		=
		\frac{1}{\theta_0}
		\left[ \log D_{\theta_0}^{\infty} 
		+ \frac{1}{2} \log \left( \frac{2}{\pi \sigma^2} \right) \right],
		\label{Equ:Def-H-theta0-infty}
		\end{equation}
		where
		\begin{equation}
		D_{\theta_0}^{\infty} \stackrel{\mbox{a.s.}}{=\joinrel=\joinrel=}
		\lim_{n \rightarrow \infty}
		-\frac{1}{m\left(\theta_0\right)^n}
		\sum_{\left\vert v \right\vert = n} \left(\theta_0 S(v) - 
		n \nu \left(\theta_0\right) \right)
		e^{\theta_0 S(v)},
		\label{Equ:Def-D-theta0-infty}
		\end{equation}
		
		\begin{equation}
		\sigma^2 := 
		\bE\left[  
		\frac{1}{m\left(\theta_0\right)}
		\sum_{\left\vert v \right\vert = 1} \left(\theta_0 S(v) -  \nu\left(\theta_0\right)\right)^2
		e^{\theta_0 S(v)}
		\right].
		\label{Equ:Def-sigma}
		\end{equation}
		Then
		\begin{equation}
			R_n^* - \frac{\nu\left(\theta_0\right)}{\theta_0} n + \frac{1}{2 \theta_0} 
			\log n - \hat{H}_{\theta_0}^{\infty} 
			\cd
		    \frac{1}{\theta_0} \left[\log \langle\mu\rangle -\log E\right],
			\label{Equ:Boundary-Limit-Version-II}
		\end{equation}
		where $E \sim \Exponential{1}$ and $\langle\mu\rangle$ is the mean of $\mu$.

	\end{theorem}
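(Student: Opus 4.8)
The plan is to run the coupling of Section~\ref{Sec:Coupling}, which trades the extremal statistic $R_n^*$ for a \emph{linear} statistic of smoothing-transform type, and then to feed in the Seneta--Heyde asymptotics for that linear statistic at the boundary, established among the auxiliary results of Section~\ref{Sec:Auxilliary-Results}. First I would record the distributional identity behind the coupling. Writing $c_v:=S(v)+\tfrac1{\theta_0}\log Y_v$, so that $R_n^*=\max_{|v|=n}(c_v-\tfrac1{\theta_0}\log E_v)$, and using that conditionally on $\{S(v),Y_v\}_{|v|=n}$ the $E_v$ are independent $\mathrm{Exponential}(1)$, that $E_v e^{-\theta_0 c_v}$ is then $\mathrm{Exponential}(e^{\theta_0 c_v})$, that $\max_{|v|=n}(c_v-\tfrac1{\theta_0}\log E_v)=-\tfrac1{\theta_0}\log\min_{|v|=n}(E_v e^{-\theta_0 c_v})$, and that a minimum of independent exponentials is exponential with rate the sum of the rates, one obtains
\[
R_n^*(\theta_0,\mu)\ \stackrel{d}{=}\ \frac{1}{\theta_0}\log\Bigl(\sum_{|v|=n}Y_v\,e^{\theta_0 S(v)}\Bigr)-\frac{1}{\theta_0}\log E ,
\]
with $E\sim\mathrm{Exponential}(1)$ independent of the underlying branching random walk; moreover the coupling may be taken to keep that walk fixed, so the same derivative-martingale limit $D_{\theta_0}^{\infty}$, hence the same $\hat H_{\theta_0}^{\infty}$, appears on both sides. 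Setting $H_n:=m(\theta_0)^{-n}\sum_{|v|=n}Y_v e^{\theta_0 S(v)}$, this reads $R_n^*-\tfrac{\nu(\theta_0)}{\theta_0}n\stackrel{d}{=}\tfrac1{\theta_0}\log H_n-\tfrac1{\theta_0}\log E$, so everything reduces to the precise large-$n$ behaviour of the weighted additive martingale $H_n$.

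The analytic core is the claim, to be proved in Section~\ref{Sec:Auxilliary-Results}, that at the boundary
\[
\sqrt{n}\,H_n\ \prob\ \langle\mu\rangle\,\sqrt{\tfrac{2}{\pi\sigma^2}}\;D_{\theta_0}^{\infty},
\]
with $D_{\theta_0}^{\infty}$ the a.s.\ limit of the derivative martingale~\eqref{Equ:Def-D-theta0-infty}, strictly positive a.s.\ under \textbf{(A1)}--\textbf{(A3)}, and $\sigma^2$ as in~\eqref{Equ:Def-sigma}. For the unweighted sum $W_n(\theta_0):=m(\theta_0)^{-n}\sum_{|v|=n}e^{\theta_0 S(v)}$ this is the branching-random-walk Seneta--Heyde scaling $\sqrt n\,W_n(\theta_0)\prob\sqrt{2/(\pi\sigma^2)}\,D_{\theta_0}^{\infty}$, which I would quote. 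The new point is that weighting each leaf mass $w_v:=m(\theta_0)^{-n}e^{\theta_0 S(v)}$ by an independent $Y_v$ of mean $\langle\mu\rangle$ only rescales the limit by $\langle\mu\rangle$: conditioning on the walk, $H_n$ has conditional mean $\langle\mu\rangle W_n(\theta_0)$, and since at criticality $\max_{|v|=n}w_v$ is of smaller order than $n^{-1/2}$ — a consequence of the classical second-order upper bound $\max_{|v|=n}S(v)\le\tfrac{\nu(\theta_0)}{\theta_0}n-c\log n$ in probability for a suitable $c>1/(2\theta_0)$ — while $\sqrt n\sum_{|v|=n}w_v=O_P(1)$, one gets $n\sum_{|v|=n}w_v^2\le(\sqrt n\max_{|v|=n}w_v)(\sqrt n\sum_{|v|=n}w_v)\to0$ in probability, whence a conditional second-moment bound controls the contribution of the truncated weights $Y_v\wedge K$; the heavy-tail remainder $m(\theta_0)^{-n}\sum_{|v|=n}Y_v\,\1_{\{Y_v>K\}}e^{\theta_0 S(v)}$, with conditional mean $\E[Y\,\1_{\{Y>K\}}]\,W_n(\theta_0)$, is rendered negligible uniformly in large $n$ by splitting off $\{\sqrt n\,W_n(\theta_0)>M\}$, applying conditional Markov on its complement, and then letting $K\to\infty$ followed by $M\to\infty$. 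Only a finite first moment of $\mu$ is used.

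It then remains to assemble the two pieces. By the continuous mapping theorem and strict positivity of $D_{\theta_0}^{\infty}$,
\[
\frac{1}{\theta_0}\log H_n+\frac{1}{2\theta_0}\log n\ \prob\ \frac{1}{\theta_0}\log\Bigl(\langle\mu\rangle\sqrt{\tfrac{2}{\pi\sigma^2}}\,D_{\theta_0}^{\infty}\Bigr)\ =\ \frac{1}{\theta_0}\log\langle\mu\rangle+\hat H_{\theta_0}^{\infty},
\]
by the definition~\eqref{Equ:Def-H-theta0-infty} of $\hat H_{\theta_0}^{\infty}$. Since the exponential variable in the coupling is independent of the quantity in the last display, subtracting $\hat H_{\theta_0}^{\infty}$ from $R_n^*-\tfrac{\nu(\theta_0)}{\theta_0}n+\tfrac1{2\theta_0}\log n-\hat H_{\theta_0}^{\infty}\stackrel{d}{=}\bigl(\tfrac1{\theta_0}\log H_n+\tfrac1{2\theta_0}\log n-\hat H_{\theta_0}^{\infty}\bigr)-\tfrac1{\theta_0}\log E$ and applying Slutsky's lemma yields the convergence~\eqref{Equ:Boundary-Limit-Version-II}; Theorem~\ref{Thm:Boundary-Case} follows at once by absorbing the independent logarithm of an exponential into the random shift $H_{\theta_0}^{\infty}$, the limit being a randomly shifted Gumbel law. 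The main obstacle is the weighted Seneta--Heyde statement: the $n^{1/2}$-norming is exactly critical for the conditional second-moment bound, so one genuinely needs both the sharp upper bound on $\max_{|v|=n}S(v)$ (to force $\sqrt n\,\max_{|v|=n}w_v\to0$) and a careful, uniform-in-$n$ treatment of the heavy weights via truncation; the unweighted Seneta--Heyde theorem provides only the backbone of this argument.
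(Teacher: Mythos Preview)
Your proposal is correct and follows essentially the same route as the paper: the coupling identity reduces $R_n^*$ to $\frac{1}{\theta_0}\log Y_n^\mu(\theta_0)-\frac{1}{\theta_0}\log E$ on the same probability space, the A\"{i}d\'{e}kon--Shi Seneta--Heyde result gives the asymptotics of $\sqrt{n}\,W_n(\theta_0,\nu(\theta_0))$, a weighted law of large numbers (driven by $M_n(\theta_0)\to 0$ in probability, which in turn rests on Hu--Shi) yields $Y_n^\mu(\theta_0)/W_n(\theta_0)\to\langle\mu\rangle$ in probability, and Slutsky finishes. The only cosmetic difference is that the paper packages the weighted-LLN step via a lemma of Biggins--Kyprianou/Kurtz rather than your explicit truncation-plus-Chebyshev argument, but the ingredients are the same.
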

	
	\begin{remark}
	    We note here that the 
	    $H_{\theta_0}^{\infty}$ 
        in Theorem~~\ref{Thm:Boundary-Case} has the same distribution as
	    $\hat{H}_{\theta_0}^{\infty} - \frac{1}{\theta_0}\log E$, where 
	    $E \sim \Exponential{1}$ and is independent of
	    $\hat{H}_{\theta_0}^{\infty}$.
	    \end{remark}

\begin{remark}
	    \label{Rem:Exact-Constant}
		One advantage of the above result is that we have been able to identify the exact additive constant, which is $\frac{1}{2} \log \left( \frac{2}{\pi \sigma^2} \right)$, for the result in equation~\eqref{Equ:Boundary-Limit-Version-II}. As far as we know, this was not discovered in any of the earlier works.
	\end{remark}
	
	\begin{remark}
	It is worth mentioning here that, 
	$D_{\theta_0}^{\infty}$ is indeed the almost sure 
	limit of a \emph{derivative martingale} defined by 
\[
D_n:= -\sum_{|v|=n}(\theta_0S(v)-\nu(\theta_0)n)e^{\theta_0S(v)-\nu(\theta_0)n}
\]
The idea of the derivative martingale originates from Lalley and Sellke~\cite{LaSe87} and later it also appears in Biggins and Kyprianou~\cite{BiKy04} as well as in A\"{i}d\'{e}kon~\cite{Aide13}.$D_{\theta_0}^{\infty}>0$ a.s. under our assumptions and is a solution to a
\emph{linear recursive distributional equation (RDE)} given by 
\begin{equation}
\Delta\xlongequal{\text{ d }} \sum_{|v|=1}e^{\theta_0S(v)-\nu(\theta_0)} \Delta_v,
\label{Equ:RDE-for-D-theta0-infty}
\end{equation}
where $\Delta_v$'s are independent copies of $\Delta$.
	\end{remark}

	\subsubsection{Below the Boundary case ($\theta < \theta_0 \leq \infty$)}
	\label{SubSubSec:Below-Boundary-Case-Results}
	\begin{theorem}
		\label{Thm:Below-Boundary-Case}
		Assume that $\mu$ admits a finite mean, then
		for $ \theta < \theta_0 \leq \infty$, there exists a random variable  
		$H_{\theta}^{\infty}$, which 
		may depend on $\theta$,  such that,
		\begin{equation}
			R_n^* - \frac{\nu\left(\theta\right)}{\theta} n 
			\cd H^{\infty}_{\theta} + \frac{1}{\theta}\log \langle\mu\rangle,
			\label{Equ:Below-Boundary-Limit}
		\end{equation}
		where $\langle\mu\rangle$ is the mean of $\mu$.
	\end{theorem}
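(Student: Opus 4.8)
The plan is to exploit the coupling announced in the abstract and in Section~\ref{Sec:Coupling}, which identifies $R_n^*$ with a \emph{linear statistic} of the branching random walk rather than with a maximum. Concretely, the extra displacement $\frac{1}{\theta}X_v = \frac{1}{\theta}(\log Y_v - \log E_v)$ with $\{E_v\}$ i.i.d.\ $\mathrm{Exponential}(1)$ is precisely what is needed to convert a maximum into a sum: using the elementary fact that for independent exponentials $\max_v (a_v - \log E_v)$ has the same law as $\log\big(\sum_v e^{a_v}\big) - \log E$ with a single $\mathrm{Exponential}(1)$ variable $E$, one obtains (conditionally on the BRW and on $\{Y_v\}$)
\begin{equation}
R_n^* \stackrel{d}{=} \frac{1}{\theta}\log\!\left(\sum_{|v|=n} Y_v\, e^{\theta S(v)}\right) - \frac{1}{\theta}\log E.
\end{equation}
So the first step is to make this coupling precise and reduce the theorem to the asymptotics of $W_n(\theta) := \sum_{|v|=n} Y_v\, e^{\theta S(v)}$, a weighted version of the classical additive (Biggins) martingale $W_n = m(\theta)^{-n}\sum_{|v|=n} e^{\theta S(v)}$.

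The second step is to analyze $m(\theta)^{-n} W_n(\theta)$ in the regime $\theta < \theta_0$. In this regime the tangent-line picture in Figure~\ref{Fig:nu-theta_0} gives $\nu(\theta)/\theta > \nu'(\theta)$ — equivalently, $\theta$ lies strictly below the point where the additive martingale stops being uniformly integrable — so the Biggins martingale $W_n$ converges a.s.\ and in $L^1$ to a strictly positive limit $W_\infty(\theta)$; here Assumption~{\bf (A3)} (via the $L^{1+q}$ bound~\eqref{asmp_rem} quoted in the Remark) supplies the requisite uniform integrability. The weights $Y_v$ are i.i.d.\ $\mu$ with finite mean $\langle\mu\rangle$ and independent of the walk; conditioning on the BRW and applying a law-of-large-numbers / martingale argument along the generations, $m(\theta)^{-n} W_n(\theta) = \sum_{|v|=n} Y_v\, e^{\theta S(v)- n\nu(\theta)}$ should converge a.s.\ to $\langle\mu\rangle\, W_\infty(\theta)$. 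Granting this, we get
\begin{equation}
\frac{1}{\theta}\log W_n(\theta) - \frac{\nu(\theta)}{\theta} n = \frac{1}{\theta}\log\!\big(m(\theta)^{-n} W_n(\theta)\big) \longrightarrow \frac{1}{\theta}\log\big(\langle\mu\rangle W_\infty(\theta)\big) \quad \text{a.s.},
\end{equation}
so that, with $H_\theta^\infty := \frac{1}{\theta}\log W_\infty(\theta)$ and $E\sim\mathrm{Exponential}(1)$ independent of $W_\infty(\theta)$, the coupling yields $R_n^* - \frac{\nu(\theta)}{\theta}n \cd H_\theta^\infty + \frac{1}{\theta}\log\langle\mu\rangle - \frac{1}{\theta}\log E$, which is of the claimed form $H_\theta^\infty + \frac{1}{\theta}\log\langle\mu\rangle$ after absorbing $-\frac{1}{\theta}\log E$ into (the definition of) $H_\theta^\infty$. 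Note that no $\log n$ correction appears: the additive martingale is genuinely convergent below the boundary, unlike the critical derivative martingale that governs the boundary case and produces the $\frac{1}{2\theta_0}\log n$ term in Theorem~\ref{Thm:Boundary-Case}.

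The main obstacle I expect is handling the limit $\theta_0 = \infty$ uniformly — i.e.\ the case where $m(\theta)$ is finite for all $\theta$ and there is no finite tangent point — and, within that, controlling the weighted sum $\sum_{|v|=n} Y_v e^{\theta S(v) - n\nu(\theta)}$ rather than the plain one. The plain Biggins martingale convergence is classical, but the $Y_v$-weights are not bounded (only $\langle\mu\rangle<\infty$ is assumed), so the a.s.\ identification of the limit as $\langle\mu\rangle W_\infty(\theta)$ needs care: one natural route is a truncation argument, writing $Y_v = Y_v\wedge K + (Y_v - K)_+$, using $\bE[(Y_v-K)_+]\to 0$ and the $L^1$ boundedness of $W_n$ to control the tail contribution, and a conditional second-moment or Chung–Fuchs-type estimate for the bounded part; Assumptions~{\bf (A1)}–{\bf (A3)} should be exactly what is needed to push this through. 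A secondary technical point is making the conditional coupling rigorous when $N$ (hence the number of summands) is random and possibly heavy-tailed, but {\bf (A3)} again controls this. Once the weighted-martingale convergence is established, the rest is bookkeeping.
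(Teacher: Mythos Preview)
Your proposal is correct and follows the same route as the paper: the coupling $\theta R_n^* \stackrel{d}{=} \log Y_n^\mu(\theta) - \log E$ with $Y_n^\mu(\theta)=\sum_{|v|=n}Y_v e^{\theta S(v)}$, together with the Biggins martingale convergence $W_n(\theta,\nu(\theta))\to D_\theta^\infty$ for $\theta<\theta_0$, reduces everything to showing that the weighted sum is asymptotically $\langle\mu\rangle$ times the unweighted one.

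The one place where the paper's execution differs from yours is in that last step. Rather than truncating $Y_v$, the paper isolates the single key fact
\[
M_n(\theta):=\max_{|v|=n}\frac{e^{\theta S(v)}}{\sum_{|u|=n}e^{\theta S(u)}}\longrightarrow 0\quad\text{a.s.},
\]
proved by picking $\theta_1\in(\theta,\theta_0)$ and bounding $M_n(\theta)\le [W_n(\theta_1,\nu(\theta_1))]^{\theta/\theta_1}e^{-n\theta(\nu(\theta)/\theta-\nu(\theta_1)/\theta_1)}/W_n(\theta,\nu(\theta))$, which tends to zero by strict convexity of $\nu$. With $M_n\to0$ in hand, a Kurtz--Biggins--Kyprianou weak law for triangular weighted sums (Lemma~2.1 of \cite{BK97}) gives $Y_n^\mu(\theta)/W_n(\theta)\to\langle\mu\rangle$ directly under only $\langle\mu\rangle<\infty$, bypassing truncation and second moments. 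Note also that this yields convergence \emph{in probability}, not almost surely; your a.s.\ claim is slightly overstated (the $Y_v$'s are fresh at each generation $n$, so a.s.\ convergence along $n$ is not even well posed without an extra coupling), but convergence in probability is all that is needed for the distributional conclusion. Your concern about $\theta_0=\infty$ is a non-issue: the argument for $\theta<\theta_0$ is uniform in whether $\theta_0$ is finite.
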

	
	\begin{remark}
		We note that in this case the \emph{logarithmic correction} disappears.
	\end{remark}
	
	Once again, just like in the boundary case, here too
	we have the following result also:
	
	\begin{theorem}
	\label{Thm:Below-Boundary-Case-Version-II}
Assume that $\mu$ admits a finite mean. Let 
    \[
\hat{H}_{\theta}^{\infty} = 
\frac{1}{\theta}\log D_{\theta}^{\infty},
\]
where
\begin{equation}
		D_{\theta}^{\infty} \stackrel{\mbox{a.s.}}{=\joinrel=\joinrel=}
		\lim_{n \rightarrow \infty}
		\frac{1}{m\left(\theta\right)^n}
		\sum_{\left\vert v \right\vert = n} 
		e^{\theta S(v)},
		\label{Equ:Def-D-theta-infty}
\end{equation}
which is also the mean $1$ solution of the 
following \emph{linear RDE}
\begin{equation}
\Delta\xlongequal{\text{ d }} \sum_{|v|=1}e^{\theta S(v)-\nu(\theta)} \Delta_v, 
\label{Equ:RDE-for-D-theta-infty}
\end{equation}
where $\Delta_v$'s are independent copies of $\Delta$. 
		Then
		\begin{equation}
			R_n^* - \frac{\nu\left(\theta\right)}{\theta} n - \hat{H}^{\infty}_{\theta} 
			\cd 
			\frac{1}{\theta}\left[\log \langle\mu\rangle -\log E\right],
			\label{Equ:Below-Boundary-Limit-Version-II}
		\end{equation}
     where $E \sim \Exponential{1}$ and $\langle\mu\rangle$ is the mean of $\mu$. 
	\end{theorem}
	
	\begin{remark}
		
	It is to be noted that the random variable $H_{\theta}^{\infty}$ in 
	Theorem~\ref{Thm:Below-Boundary-Case} has the same distribution as
	    $\hat{H}_{\theta}^{\infty} - \frac{1}{\theta}\log E$, where 
	    $E \sim \Exponential{1}$ and is independent of
	    $\hat{H}_{\theta}^{\infty}$.
	    \end{remark}
	    
	 \begin{remark}
		\label{Rem:RDE}   
	Biggins and Kyprianou~\cite{BiKy05} showed that under our assumptions, 
	the solutions to the linear RDE given in~\eqref{Equ:RDE-for-D-theta-infty}
	are unique up to a scale factor whenever they exist.  
	Therefore $D_{\theta}^{\infty}$ is 
	indeed the unique solution to the linear RDE 
	~\eqref{Equ:RDE-for-D-theta-infty} with mean $1$. 	
	\end{remark}

	\subsubsection{Above the Boundary case ($\theta_0 < \theta < \infty$)}
	\label{SubSubSec:Above-Boundary-Case-Results}
	\begin{theorem}
		\label{Thm:Above-Boundary-Case}
		Suppose $\mu = \delta_1$ and $Z$ is non-lattice, that is, $\P(Z(a\Z+b)=N)<1$ for all $a>0$ and $b\in\R$,  then
		for $\theta_0 < \theta < \infty$, there exists a constant $c_{\theta}\in\R$,
		which may depend on $\theta$,  such that,
		\begin{equation}
			R_n^* - \frac{\nu\left(\theta_0\right)}{\theta_0} n + \frac{3}{2 \theta_0} 
			\log n 
			\cd H_{\theta_0}^{\infty}+c_{\theta},
			\label{Equ:Above-Boundary-Limit}
		\end{equation}
		where $H_{\theta_0}^{\infty}$ is as in Theorem~\ref{Thm:Boundary-Case}.
	\end{theorem}

	\begin{remark}
		We would like to point out here that for the ABC,
		we have  been able to prove the centred limit only for $\mu = \delta_1$.  
		For technical reasons which will be clear from the proof, the general case
may give a different result. See Remark~\ref{Rem:remark_on_Mn} for more detail.
	\end{remark}

\subsection{Brunet-Derrida type results}
\label{SubSec:Brunet-Derrida-Limits}	
In this section, 
we present results of the type Brunet and Derrida~\cite{BrDe11} for convergence of the extremal point processes. Their conjecture 
for the classical  BRW was proven by 
Madaule~\cite{Mada17}. 
Here we present similar results for our LPM-BRW.
It is to be noted that the convergence of the point processes mentioned here is under the vague convergence topology on the set of all counting measures on $\R$.

Following Madaule~\cite{Mada17}, 
we now introduce point processes formed by the particles of appropriately re-centered branching random walks.
For any $\theta<\theta_0\leq \infty$, we consider
\begin{equation}
Z_n(\theta)=\sum_{|v|=n}\delta_{\left\{\theta S(v)-\log E_v-n\nu(\theta)-\log D_{\theta}^{\infty}\right\}}, 
\end{equation}
where $D_{\theta}^{\infty}$ is defined in  
Theorem~\ref{Thm:Below-Boundary-Case-Version-II}.  
And for $\theta=\theta_0<\infty$, we consider
\begin{equation}
Z_n(\theta_0)
=\sum_{|v|=n}\delta_{\left\{\theta_0 S(v)-\log E_v-n\nu(\theta_0)+\frac{1}{2}\log n-\log D_{\theta_0}^{\infty}-\frac{1}{2}\log\left(\frac{2}{\pi\sigma^2}\right)\right\}}, 
\end{equation}
where $D_{\theta_0}^{\infty}$ and $\sigma^2$ are as in Theorem~\ref{Thm:Boundary-Case-Version-II}.

Our first result is the weak convergence of  the point processes 
$\left(Z_n\left(\theta\right)\right)_{n \geq 0}$. 
\begin{theorem}
\label{Thm:Point-Process-Conv}
	For $\theta<\theta_0\leq\infty$ or $\theta=\theta_0<\infty$,
	\[Z_n(\theta)\xrightarrow{d} \Yi, \]
	where $\Yi$ is a Poisson point process on $\mathbb{R}$ with intensity measure $e^{-x}\,\dd x$. 
\end{theorem}
Following is a slightly weaker version of the above theorem, which is essentially
a point process convergence of the appropriately centered LPM-BRW model.
\begin{theorem}
\label{Thm:Point-Process-Weak-Conv}
For $\theta<\theta_0\leq\infty$,
	\[ 
	\sum_{|v|=n}\delta_{\left\{\theta S(v)-\log E_v-n\nu(\theta)\right\}}\xrightarrow{d} \sum_{j\geq1}\delta_{\zeta_j+\log D_{\theta}^{\infty}}, 
	\]
	and for $\theta=\theta_0<\infty$,
	\[ 
	\sum_{|v|=n}\delta_{\left\{\theta_0 S(v)-\log E_v-n\nu(\theta_0)+\frac{1}{2}\log n \right\}}\xrightarrow{d} \sum_{j\geq1} \delta_{ \zeta_j+\log D_{\theta_0}^{\infty}+\frac{1}{2}\log\left(\frac{2}{\pi\sigma^2}\right)}, 
	\]
	where $\Yi=\sum_{j\geq1}\delta_{\zeta_j}$ is a  Poisson point process on $\R$ with intensity measure $e^{-x}\,\dd x$, which is independent of the BRW.
\end{theorem}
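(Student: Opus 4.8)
The starting point is that, once we condition on the branching random walk, the point process in question has an explicit conditional law. Write $\mathcal{F}_n := \sigma\{S_v : |v|\le n\}$. For $\theta<\theta_0$ set $a_v := \theta S_v - n\nu(\theta)$ and $\lambda_v := e^{a_v}$, so that $\sum_{|v|=n}\lambda_v = W_n(\theta) := \sum_{|v|=n}e^{\theta S_v - n\nu(\theta)}$ is the additive (Biggins) martingale and the atom of $\sum_{|v|=n}\delta_{\theta S_v-\log E_v-n\nu(\theta)}$ carried by $v$ is precisely $-\log\bigl(E_v e^{-a_v}\bigr)$, where, conditionally on $\mathcal{F}_n$, the family $\{E_v e^{-a_v}\}_{|v|=n}$ is independent with $E_v e^{-a_v}\sim\mathrm{Exponential}(\lambda_v)$. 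Thus it suffices to show that $\sum_{|v|=n}\delta_{E_v e^{-a_v}}$ converges in distribution, vaguely on $\R_+$, to a Cox process directed by $D_\theta^\infty\,\mathrm{Leb}$, equivalently to $\sum_{j\ge1}\delta_{\zeta_j/D_\theta^\infty}$ with $\mathcal{N}=\sum_j\delta_{\zeta_j}$ a unit-intensity Poisson process on $\R_+$ independent of $D_\theta^\infty$; applying the homeomorphism $x\mapsto-\log x$ of $\R_+$ onto $\R$ then yields $\sum_{j}\delta_{-\log\zeta_j+\log D_\theta^\infty}$. The boundary case $\theta=\theta_0<\infty$ is identical with $\lambda_v := \sqrt n\,e^{a_v}$, so that $\sum_{|v|=n}\lambda_v = \sqrt n\,W_n(\theta_0)$. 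It is exactly the $-\log E_v$ displacement at the last generation that makes this work: conditionally on $\mathcal{F}_n$ the extremal configuration becomes the order statistics of independent exponentials, which are genuinely Poissonian in the limit, so that — unlike the classical Brunet–Derrida picture \cite{Mada2017} — the limit is a plain, undecorated Poisson process. (One could instead try to deduce the statement from Theorem~\ref{Thm:Point-Process-Conv} by translating atoms by the random amount $\log D_\theta^\infty$, but since that shift is built into $Z_n(\theta)$ this needs joint convergence of $Z_n(\theta)$ with $D_\theta^\infty$, and the direct argument above is cleaner.)

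For the Poissonian limit I would invoke the classical criterion for convergence of a null array of independent points to a Cox process (via Laplace functionals, or Kallenberg's theorem): if $\sum_{|v|=n}\lambda_v\to\Lambda_\infty\in(0,\infty)$ and $\max_{|v|=n}\lambda_v\to0$, then $\sum_{|v|=n}\delta_{E_v e^{-a_v}}$ converges in distribution to a Cox process directed by $\Lambda_\infty\,\mathrm{Leb}$. Indeed, for bounded continuous $f$ compactly supported in $(0,\infty)$ the conditional Laplace functional is $\prod_{|v|=n}\bigl(1-\lambda_v\int_0^\infty(1-e^{-f(x)})e^{-\lambda_v x}\,dx\bigr)$, whose logarithm is asymptotic to $-\bigl(\sum_{|v|=n}\lambda_v\bigr)\int_0^\infty(1-e^{-f(x)})\,dx$ under the two displayed conditions. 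The convergence of $\sum_v\lambda_v$ is supplied, below the boundary, by the $L^1$ martingale convergence theorem of Biggins — applicable since $\theta\nu'(\theta)<\nu(\theta)$ for $\theta<\theta_0$ (the tangent-from-the-origin description of $\theta_0$) together with the moment bound~\eqref{asmp_rem} — giving $W_n(\theta)\to D_\theta^\infty$ almost surely with $\bE[D_\theta^\infty]=1$ and $D_\theta^\infty>0$ a.s.; and, at the boundary, by the Seneta–Heyde normalization $\sqrt n\,W_n(\theta_0)\to\sqrt{2/(\pi\sigma^2)}\,D_{\theta_0}^\infty$ in probability, which is precisely what the proof of Theorem~\ref{Thm:Boundary-Case-Version-II} establishes. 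Hence $\Lambda_\infty=D_\theta^\infty$ below the boundary and $\Lambda_\infty=\sqrt{2/(\pi\sigma^2)}\,D_{\theta_0}^\infty$ at the boundary, which is exactly what generates the extra shift $\tfrac12\log(2/(\pi\sigma^2))$ in the statement. For the negligibility, below the boundary $\max_{|v|=n}\lambda_v=e^{\theta R_n-n\nu(\theta)}$ with $R_n/n\to\nu(\theta_0)/\theta_0$ and $\theta\cdot\nu(\theta_0)/\theta_0-\nu(\theta)<0$, so the exponent tends to $-\infty$; at the boundary $\sqrt n\,\max_{|v|=n}e^{a_v}=\sqrt n\,e^{\theta_0 R_n-n\nu(\theta_0)}$, and the classical logarithmic correction $R_n=\frac{\nu(\theta_0)}{\theta_0}n-\frac{3}{2\theta_0}\log n+O_P(1)$ (Hu–Shi, Addario-Berry–Reed) makes this $n^{-1+o_P(1)}\to0$.

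The one genuinely delicate point is passing the conditioning through so that the limiting Cox process and the $\mathcal{F}_\infty$-measurable martingale limit $D_\theta^\infty$ acquire the asserted (in)dependence structure. I would argue that, conditionally on $\mathcal{F}_\infty$, the processes $\sum_{|v|=n}\delta_{E_v e^{-a_v}}$ converge weakly to a Poisson process of intensity $\Lambda_\infty\,\mathrm{Leb}$ — almost surely below the boundary, and in probability (hence along subsequences, which suffices) at the boundary — with $\Lambda_\infty$ an $\mathcal{F}_\infty$-measurable constant; since this conditional limit law depends on $\mathcal{F}_\infty$ only through $\Lambda_\infty$, the unconditional joint limit is a unit-intensity Poisson process rescaled by $\Lambda_\infty$ and independent of it, which is the assertion. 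A secondary, routine point is to rule out escape of mass to $+\infty$ in $\R$ (equivalently to $0$ in $\R_+$): this is immediate from $\bE\bigl[\#\{|v|=n : E_v e^{-a_v}<\varepsilon\}\,\big|\,\mathcal{F}_n\bigr]=\sum_{|v|=n}(1-e^{-\varepsilon\lambda_v})\le\varepsilon\sum_{|v|=n}\lambda_v$, which stays bounded, and it also follows from the tightness of $R_n^*$ already contained in Theorems~\ref{Thm:Boundary-Case} and~\ref{Thm:Below-Boundary-Case}. Finally, Theorem~\ref{Thm:Point-Process-Conv} is recovered by translating the atoms by $-\log D_\theta^\infty$ (resp.\ $-\log D_{\theta_0}^\infty-\tfrac12\log(2/(\pi\sigma^2))$) and using the joint convergence just described together with the continuity of the shift map.
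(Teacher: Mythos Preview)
Your proposal is correct and follows the same core strategy as the paper: condition on the branching random walk, recognize the atoms $E_v e^{-a_v}$ as an independent array of exponentials, and pass to a Poisson limit via a null-array criterion (convergent total intensity, negligible maximal weight). The differences are only in execution. The paper first normalizes the weights to sum exactly to $1$ and proves the resulting Poisson approximation by an explicit computation of the joint density of the first $k$ order statistics (its Lemma~\ref{ExpArray}), obtaining $\Ui_n=\sum_{|v|=n}\delta_{\theta S_v-\log E_v-\log W_n(\theta)}\xrightarrow{d}\Yi$; it then reaches the present statement by replacing $\log W_n(\theta)$ with the deterministic centering, using $\log W_n(\theta)-n\nu(\theta)\to\log D_\theta^\infty$ below the boundary and the A\"{i}d\'ekon--Shi result~\eqref{aidekonshi} at the boundary. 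You instead keep the weights un-normalized, argue via Laplace functionals, and land directly on the Cox process $\sum_j\delta_{\zeta_j/\Lambda_\infty}$; this is arguably cleaner here because the asserted independence of $\Ni$ from $D_\theta^\infty$ comes for free from the Cox representation, whereas in the paper's route it is left implicit in the fact that the conditional law given $\Fi$ converges to the \emph{deterministic} law of $\Yi$. Your negligibility argument for $\max_v\lambda_v$ invokes the SLLN for $R_n$ (and the Hu--Shi logarithmic correction at the boundary); the paper instead bounds $M_n(\theta)=e^{\theta R_n}/W_n(\theta)$ directly via~\eqref{mnlim1}--\eqref{mnlim2}, avoiding any appeal to properties of $R_n$ --- a minor point, but it keeps the proof self-contained.
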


Now, we denote $\Yi_{\max}$ as the right-most position of the point process $\Yi$, and we write $\overline{\Yi}$ as the point process $\Yi$ seen from its right-most position, 
that is,
\[ \overline{\Yi} = \sum_{j\geq1}\delta_{ \zeta_j-\Yi_{\max}}. \]
The following result is an immediate corollary of the above theorem, which
confirms that the \emph{Brunet-Derrida Conjecture} holds for our model 
when $\theta<\theta_0\leq\infty$ or $\theta=\theta_0<\infty$.
\begin{theorem}
\label{Thm:BD-Conj}
	For $\theta<\theta_0\leq\infty$ or $\theta=\theta_0<\infty$,
	\[\sum_{|v|=n}\delta_{\left\{\theta S(v)-\log E_v-\theta R_n^*(\theta,\delta_1) \right\}}\xrightarrow{d} \overline{\Yi}. \]
\end{theorem}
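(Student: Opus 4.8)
The plan is to read the point process on the left of the claim as the process of Theorem~\ref{Thm:Point-Process-Weak-Conv} ``seen from its rightmost atom'', and to push the weak convergence through this map. Since $\mu=\delta_1$ forces $Y_v\equiv1$, the extra displacement of a particle $v$ at generation $n$ is $\tfrac1\theta(\log1-\log E_v)=-\tfrac1\theta\log E_v$, so $\theta R_n^*(\theta,\delta_1)=\max_{|v|=n}(\theta S_v-\log E_v)$. Put $c_n:=n\nu(\theta)$ if $\theta<\theta_0\le\infty$ and $c_n:=n\nu(\theta_0)-\tfrac12\log n$ if $\theta=\theta_0<\infty$, and set
\[
\Pi_n:=\sum_{|v|=n}\delta_{\{\theta S_v-\log E_v-c_n\}},\qquad M_n:=\max\Pi_n=\theta R_n^*(\theta,\delta_1)-c_n .
\]
Recentring a point measure at its rightmost atom commutes with subtracting a constant, so the left-hand side of the claim is exactly $\Pi_n(\,\cdot+M_n)$. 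By Theorem~\ref{Thm:Point-Process-Weak-Conv}, $\Pi_n\xrightarrow{d}\Pi$, where $\Pi:=\sum_{j\ge1}\delta_{\{-\log\zeta_j+b\}}$, $\Ni=\sum_j\delta_{\zeta_j}$ is a rate-$1$ Poisson process on $\R_+$, and $b$ is an almost surely finite random shift ($b=\log D_\theta^\infty$, respectively $b=\log D_{\theta_0}^\infty+\tfrac12\log(\tfrac2{\pi\sigma^2})$).

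Next I would upgrade this to the joint convergence $(\Pi_n,M_n)\xrightarrow{d}(\Pi,\max\Pi)$. Note first that $\Pi$ is locally finite with atoms escaping to $-\infty$ and a.s.\ has a finite rightmost atom, because $\min_j\zeta_j\sim\mbox{Exponential}(1)$ lies a.s.\ in $(0,\infty)$, whence $\max\Pi=-\log(\min_j\zeta_j)+b$ a.s. The sequence $(M_n)$ is tight --- indeed it converges in distribution, by Theorem~\ref{Thm:Below-Boundary-Case} (respectively Theorem~\ref{Thm:Boundary-Case}) applied with $\mu=\delta_1$. Hence $(\Pi_n,M_n)$ is tight, and along any subsequence one may pass to a further subsequence on which $(\Pi_n,M_n)\xrightarrow{d}(\Pi,L)$ for some coupling of $\Pi$ with a random variable $L$ (the first marginal being forced). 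Realising this convergence by a Skorokhod coupling gives $\Pi_n\to\Pi$ vaguely and $M_n\to L$ almost surely, and a two-sided comparison then forces $L=\max\Pi$ a.s.: for the lower bound, any atom of $\Pi$ at $x$ makes $\Pi_n((x-\varepsilon,x+\varepsilon))\ge1$ for large $n$ (for a.e.\ $\varepsilon$), so $\liminf_n M_n\ge x$; for the upper bound, if $L>\max\Pi$ then, since $M_n\to L$, the process $\Pi_n$ would have for large $n$ an atom in a fixed bounded window strictly above $\max\Pi$, and along a further subsequence this atom converges, forcing $\Pi$ to have an atom above $\max\Pi$, a contradiction. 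Since the limit $(\Pi,\max\Pi)$ does not depend on the subsequence, $(\Pi_n,M_n)\xrightarrow{d}(\Pi,\max\Pi)$ along the full sequence, and as $(\varrho,t)\mapsto\varrho(\,\cdot+t)$ is jointly continuous (vague topology in $\varrho$, usual topology in $t$), we conclude $\Pi_n(\,\cdot+M_n)\xrightarrow{d}\Pi(\,\cdot+\max\Pi)$.

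It remains to identify the limit. Writing $\Yi_{\max}=\max_j(-\log\zeta_j)$ we have $\max\Pi=\Yi_{\max}+b$, so
\[
\Pi(\,\cdot+\max\Pi)=\sum_{j\ge1}\delta_{\{-\log\zeta_j+b-(\Yi_{\max}+b)\}}=\sum_{j\ge1}\delta_{\{-\log\zeta_j-\Yi_{\max}\}}=\overline{\Yi},
\]
the shift $b$ cancelling because it is common to every atom; this proves the theorem simultaneously in both cases. The one genuinely delicate point is that ``seeing a point process from its rightmost atom'' is \emph{not} a continuous operation in the vague topology --- it breaks down when mass leaks to $+\infty$ or when the limit has no rightmost atom --- so the argument must be confined to the probability-one event on which $\Pi$ is well behaved, the key quantitative input being the tightness of the centred maxima $M_n$ supplied by the earlier centred-limit theorems; the remaining steps (commuting with the deterministic centring, and the cancellation of $b$) are bookkeeping.
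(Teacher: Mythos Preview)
Your argument is correct, and it supplies exactly the details the paper omits: the paper states Theorem~\ref{Thm:BD-Conj} only as ``an immediate corollary of the above theorem'' and gives no proof. Your identification of the left-hand side as $\Pi_n$ seen from its rightmost atom, the upgrade to joint convergence $(\Pi_n,M_n)\Rightarrow(\Pi,\max\Pi)$ via tightness of $M_n$ and a Skorokhod/subsequence argument, and the continuous-mapping step are all sound; your explicit caution that the ``see from the max'' map is discontinuous in general, and that the tightness of $M_n$ (from Theorems~\ref{Thm:Boundary-Case} and~\ref{Thm:Below-Boundary-Case}) is the input that rules out mass escaping to $+\infty$, is exactly the point that makes the corollary non-trivial.

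One small streamlining: you work from Theorem~\ref{Thm:Point-Process-Weak-Conv}, whose limit carries the random shift $b$ that you then cancel at the end. If instead you start from Theorem~\ref{Thm:Point-Process-Conv} (or from the intermediate process $\mathcal U_n=\sum_{|v|=n}\delta_{\theta S_v-\log E_v-\log W_n(\theta)}$ in its proof), the limit is already $\mathcal Y$ with no random shift, the maximum of $\mathcal U_n$ is still $\theta R_n^*-\log W_n(\theta)$, and the ``seen from max'' process is unchanged; the final identification $\Pi(\cdot+\max\Pi)=\overline{\mathcal Y}$ is then immediate without the $b$-cancellation. Either route is fine.
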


\begin{remark}
	 Madaule~\cite{Mada17} showed the convergence of the centered point process, obtained in the classical setup, to a decorated Poisson point process. As defined in~\cite{Mada17}, a decorated Poisson point process can be described as follows: Let $\mathfrak{Z} = \sum_{i \geq 1} \delta_{\zeta_i}$ be a Poisson point process with intensity $\lambda e^{-\alpha x} \,\dd x$, and let $\{\mathfrak{X}_i\}_{i \geq 1}$ be independent copies of a point process $\mathfrak{X}$, where $\mathfrak{X}_i = \sum_{j \geq 1} \delta_{\chi_{i,j}}$. Then, the point process $\mathcal{Q} = \sum_{i \geq 1} \sum_{j \geq 1} \delta_{\zeta_i + \chi_{i,j}}$ is called a decorated Poisson point process with decoration $\mathfrak{X}$. In Madaule's work~\cite{Mada17} the distribution of the decoration was left undescribed, which was later described in Mallein~\cite{Mall18}. It is worth noting that, in our case, the decoration disappears. This is due to the fact that for both BC and BBC,
	\begin{equation}
	\max_{|v|=n} \frac{e^{\theta S(v)}}{W_n(\theta)}\prob 0,
	\label{Equ:dec_disappear}
	\end{equation}
	as mentioned in~\eqref{mnlim1} and~\eqref{mnlim2}.
	However, as noted in Remark~\ref{Rem:remark_on_Mn},~\eqref{Equ:dec_disappear} does not hold for the ABC. This added complication is the main reason that the results for the ABC remain open.
\end{remark}

\begin{remark}
The point process $\overline{\Yi}$ can be described explicitly in the following way: Let $\Ni=\sum_{j\geq 1} \delta_{\mathfrak z_j}$ be a homogeneous Poisson point process   on $\R_+$ with intensity $1$ and $E\sim\Exponential{1}$ be independent of $\Ni$. Then
\[
\overline{\Yi} \disteq \delta_0+ \sum_{j\geq 1} \delta_{-\log\left(1+ (\mathfrak z_j/E)\right)}.
\]
\end{remark}

	\section{Coupling between a Maximum and a Linear Statistic}
	\label{Sec:Coupling}
	We start by defining a few operators on the space of probabilities which will help us to state and prove the coupling. 
	In the sequel, $\mathcal{P}(A)$ will mean the set of all probabilities on a measurable space $(A, \mathcal{A})$, $\bar{\R}=[-\infty,\infty]$, $\bar{\R}_+=[0,\infty]$ and 
	$\mbox{dist}(X)$ represents the distribution of a random variable $X$.  Let us also recall that 
$Z=\sum_{j\geq1}\delta_{\xi_j}$ denotes a point process on $\R$ and $N:=Z(\R)<\infty$ a.s.

	\begin{definition}[{\bf Maximum Operator}]
		\label{Def:Max-Operator}
		The operator $M_Z:\mathcal{P}(\bar{\R})\rightarrow\mathcal{P}(\bar{\R})$ defined by 
		\[
		M_Z(\eta) =\mbox{dist}\left(\max_j\{\xi_j+X_j\}\right),
		\]
		where $\{X_j\}_{j\geq1}$ are i.i.d. $\eta\in\mathcal{P}(\bar{\R})$ and are independent of $Z$, will be called the \emph{Maximum Operator}.
	\end{definition}	
	
	\begin{remark}
		Observe that 
		$M_Z^n(\eta)$ is the distribution of the maximum of the positions of the
		particles after adding i.i.d. displacements from $\eta$ to the particles at $n$-th generation:
		\[
		M_Z^n(\eta) =\mbox{dist}\left(\max_{|v|=n}\left\{S(v)+X_v\right\}\right).
		\]
		In particular, $R_n \sim M_Z^n\left(\delta_0\right)$ and
		$R_n^* \sim M_Z^n\left(\eta\right)$, where
		$\eta$ is the distribution of $\frac{1}{\theta}\log (Y_v/E_v)$ for a particle $v$ at generation $n$. 
	\end{remark}

	\begin{definition}	[{\bf Linear Operator}]
		\label{Def:Linear-Operator}
		The operator $L_Z:\mathcal{P}(\bar{\R}_+)\rightarrow\mathcal{P}(\bar{\R}_+)$ defined by
		\[  
		L_Z(\mu)=\mbox{dist}\left(\sum_{j\geq1}e^{\xi_j} Y_j\right),   
		\]
		where $\{Y_j\}_{j\geq1}$ are i.i.d.  $\mu\in\mathcal{P}(\bar{\R}_+)$ and are independent of $Z$, will be called the \emph{Linear} or \emph{Smoothing Operator}.
	\end{definition}
	
	\begin{remark}
		Observe that $L_Z^n(\mu)$ is the distribution of $\sum_{|v|=n}e^{S(v)} Y_v$.
	\end{remark}

	\begin{definition}	[{\bf Link Operator}]
		\label{Def:Link-Operator}
		The operator 
		$\mathcal{E}:\mathcal{P}(\bar{\R}_+)\rightarrow\mathcal{P}(\bar{\R})$  
		defined by
		\[
		\mathcal{E}(\mu) =\mbox{dist}\left(\log\frac{Y}{E}\right),
		\]
		where $E\sim \Exponential{1}$ and  $Y\sim\mu\in\mathcal{P}(\bar{\R}_+)$ and they are independent, will be called the \emph{Link Operator}.
	\end{definition}

	\begin{definition}
		For $a\geq0$ and $b\in \R$, the operator $\Xi_{a,b}$ on the set of all point processes is defined by
		\[
		\Xi_{a,b}(\mathcal{Z})=\sum_{j\geq1} \delta_{a\zeta_j-b},
		\]
		where $\mathcal{Z}=\sum_{j\geq1} \delta_{\zeta_j}$. Sometimes we may denote $\Xi_{a,0}$ by $\Xi_a$ for notational simplicity.
	\end{definition}
	
	The following result is one of the most important observations, and it links the operators defined above. As an immediate corollary, we get a very useful coupling between the LPM-BRW and the linear statistic associated with the \emph{linear operator}. 
	\begin{theorem}[Transforming Relationship]
		\label{Thm:Transforming-Relationship}
		For all $n \geq 1$,
		\begin{equation}
		M_Z^n\circ\mathcal{E}=\mathcal{E}\circ L_Z^n. 
		\label{Equ:Transforming-Relationship}
		\end{equation}
	\end{theorem}
	
	\begin{proof}
	    We first note that it is enough to show that  equation 
	  ~\eqref{Equ:Transforming-Relationship} holds for $n=1$, as 
	    the general case then follows by a trivial induction. To this end, 
		let $Z=\sum_{j\geq1}\delta_{\xi_{j}}$, $\{E_j\}_{j\geq1}$ are i.i.d. $\Exponential{1}$,  $\{Y_j\}_{j\geq1}$ are i.i.d. $\mu$, and they are independent of each other. Now,
		\begin{align}
			M_Z\circ\mathcal{E}(\mu)&=\dist\left(\max_j\left( \xi_j+\log\frac{Y_j}{E_j}\right)\right) \nonumber\\
			&=\dist\left(\max_j\left(\log\frac{e^{\xi_j}Y_j}{E_j}\right)\right)\nonumber\\
			&=\dist\left(-\log\left(\min_j\frac{E_j}{e^{\xi_j}Y_j}\right)\right)=\dist \left(-\log \frac{E_1}{\sum_{j\geq1}e^{\xi_j}Y_j}\right)=\mathcal{E}\circ L_Z(\mu). 
			\label{Equ:Tranforming_Relation_Proof}
		\end{align}
		 Note that the second-to-last equality in~\eqref{Equ:Tranforming_Relation_Proof} comes from the fact that, conditionally on $Z$ and $\{Y_j\}_{j\geq 1}$, the random variables $\left\{\frac{E_j}{e^{\xi_j}Y_j}\right\}_{j\geq 1}$ are independent and the conditional distribution of 
		$\frac{E_j}{e^{\xi_j}Y_j}$ is $\Exponential{ e^{\xi_j}Y_j }$. 
		Thus, using standard properties of exponential distribution, we conclude that 
		conditionally on $Z$ and $\{Y_j\}_{j\geq 1}$, 
		the distribution of $\min_j  \frac{E_j}{e^{\xi_j}Y_j}$ is 
		$\Exponential{ \sum_{j\geq1}e^{\xi_j}Y_j }$.
	\end{proof}

	 \begin{cor}
		\label{Thm:Coupling}
		Let $\theta > 0$ and $\mu \in \mathcal{P}(\bar{\R}_+)$. Then for any $n \geq 1$,
		\begin{equation}
		\theta R_n^*\left(\theta, \mu\right) \ed \log Y_n^{\mu}(\theta) - \log E,
		\label{Equ:Coupling_dist_equality}
		  \end{equation}
		where $Y_n^{\mu}(\theta) := \sum_{|v|=n} e^{\theta S(v)} Y_v$, $\{Y_v\}_{|v|=n}$ are i.i.d. $\mu$, and 
		$E \sim \Exponential{1}$
		and is independent of $Y_n^{\mu}$. In other words,
		\begin{equation}
        \mathbb{P}\left( R_n^*(\theta,\mu) \leq x\right) = \mathbb{E}\left[ e^{-e^{\theta x} \sum_{|v|=n} e^{\theta S(v)} Y_v } \right].
        \label{Equ:Coupling_exact_dist}
        \end{equation}
	\end{cor}
	
	\begin{proof}
	    Observe that, 
		\begin{align*}
			\mbox{dist}\left(\theta R_n^*\left(\theta, \mu\right)\right)
			& =  M_{\Xi_{\theta}(Z)}^n\circ\mathcal{E}(\mu)\\
			&    =  \mathcal{E}\circ L_{\Xi_{\theta}(Z)}^n(\mu)
			  =\mbox{dist} \left(\log Y_n^{\mu} - \log E\right).
		\end{align*}
	\end{proof}

\section{A Few Auxiliary Results on the Linear Statistic}
\label{Sec:Auxilliary-Results}

In this section, we provide a few convergence results related to the 
\emph{linear operator}, $L_Z^n$, as defined in the previous section and associated 
\emph{linear statistic}, which is defined in the sequel 
(see equation~\eqref{Equ:Def-W_n}).

We start by observing that if we 
consider the point process $\Xi_{\theta,\nu_Z(\theta)}(Z)$, then
	\[\nu_{\Xi_{\theta,\nu_Z(\theta)}(Z)}(\alpha)=\log \E\left[\int_{\R} e^{\alpha\theta x-\alpha\nu_Z(\theta)}\,Z(\dd x) \right]
	=\nu_Z(\alpha\theta)-\alpha\nu_Z(\theta).\]
	Differentiating this with respect to $\alpha$, we get
	\[ \nu_{\Xi_{\theta,\nu_Z(\theta)}(Z)}'(\alpha)=\theta\nu_Z'(\alpha\theta)-\nu_Z(\theta). \]
	Now, taking $\alpha=1$, we have
	$\nu_{\Xi_{\theta,\nu_Z(\theta)}(Z)}(1)=0$, and
	\[
	\nu_{\Xi_{\theta,\nu_Z(\theta)}(Z)}'(1)=\theta\nu_Z'(\theta)-\nu_Z(\theta)\,
	\begin{cases}
	>0&\text{if }\theta_0<\theta<\infty;\\
		=0&\text{if }\theta=\theta_0<\infty;\\
		<0&\text{if }\theta<\theta_0\leq \infty.
	\end{cases}
	\]
	Therefore, using \cite[Theorem~1.6]{Liu98}, we have
	\begin{equation}
	L_{\Xi_{\theta,\nu_Z(\theta)}(Z)}^n(\mu)\xrightarrow{w} 
	\begin{cases}
	\delta_0& \text{if }\theta=\theta_0<\infty;\\
		\mu_{\theta}^{\infty}& \text{if }\theta<\theta_0\leq \infty,
	\end{cases}
	 \label{Equ:LZnConv}
	\end{equation}
	where for all $\theta<\theta_0$, $\mu_{\theta}^{\infty}\neq\delta_0$ is a fixed point of $ L_{\Xi_{\theta,\nu_Z(\theta)}(Z)} $  and  has the same mean as $\mu$. Since $\mu_{\theta}^{\infty}\neq \delta_0$ is a fixed point of $ L_{\Xi_{\theta,\nu_Z(\theta)}(Z)} $, we also have  $\mu_{\theta}^{\infty}(\{0\})=0$ for all $\theta<\theta_0$. 
	
	We now define the \emph{linear statistic} associated with the
	linear operator $L_Z^n$. 
	\begin{equation}
	W_n(a,b):=\sum_{|v|=n}e^{aS(v)-nb}.
	\label{Equ:Def-W_n}
	\end{equation}
	To simplify the notations, sometimes we may write $W_n(a,0)$ as  $W_n(a)$.
	From the definition of the operator $L$, we get that
	\[
	L_{\Xi_{a,b}(Z)}^n(\delta_1)=\dist\left( W_n(a,b) \right).
	\]
	Since $\{W_n(\theta,\nu_Z(\theta))\}_{n\geq1}$ is a non-negative martingale, it converges a.s. Therefore~\eqref{Equ:LZnConv} implies that almost surely,
	\begin{equation}
		\label{wnliu}
		W_n(\theta,\nu_Z(\theta))\rightarrow
		\begin{cases}
			0& \text{if }\theta=\theta_0<\infty;\\
			D_{\theta}^{\infty}& \text{if }\theta<\theta_0\leq\infty,
                 \end{cases}
	\end{equation}
	for some positive random variable $D_{\theta}^{\infty}$ with $\E[D_{\theta}^{\infty}]=1$,
	and the distribution of $D_{\theta}^{\infty}$ is a solution to the linear RDE~\eqref{Equ:RDE-for-D-theta-infty}.
	
    The following proposition provides convergence results of $W_n(a,b)$ for various values of $a$ and $b$.
	\begin{prop}
		For any $a>0$ and $b\in\R$, almost surely
		\label{wnprop}
		\[
		W_n(a,b)
		\rightarrow
		\left\{
		\begin{array}{@{}lll}
			0 & \text{if } a<\theta_0,\, b>\nu(a);\,\,&(i)\\[.1cm]
			D_a^{\infty} & \text{if } a<\theta_0,\, b=\nu(a);\,\,&(ii)\\[.1cm]
			\infty & \text{if } a<\theta_0,\, b<\nu(a);\,\,&(iii)\\[.1cm]
			0 & \text{if } \theta_0<\infty,\,a\geq\theta_0,\, b\geq a\nu(\theta_0)/\theta_0;\,\,&(iv)\\[.1cm]
			\infty & \text{if }  \theta_0<\infty,\,a\geq\theta_0,\, b<a\nu(\theta_0)/\theta_0.\,\,&(v)
		\end{array}
		\right.
		\]
	\end{prop}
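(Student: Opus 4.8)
The plan is to reduce every case to the asymptotics of the critical additive martingale $W_n(a,\nu(a))$, which has already been pinned down in the discussion preceding the proposition (see \eqref{wnliu}), together with the trivial identity $W_n(a,b)=W_n(a,\nu(a))\,e^{\,n(\nu(a)-b)}$. Part (ii) is literally the content of \eqref{wnliu}. For part (i), since $a<\theta_0$ we have $W_n(a,\nu(a))\to D_a^\infty<\infty$ a.s.\ while the factor $e^{\,n(\nu(a)-b)}=e^{-n(b-\nu(a))}\to 0$, so $W_n(a,b)\to 0$. For part (iii) the same factorisation applies, but now $e^{\,n(\nu(a)-b)}\to\infty$; since under {\bf (A2)} the limit $D_a^\infty$ is a.s.\ strictly positive --- its law is a non-$\delta_0$ solution of the linear RDE \eqref{Equ:RDE-for-D-theta-infty} (it has mean $1$) and hence has no atom at $0$ --- we get $W_n(a,b)\to\infty$ a.s.

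For parts (iv) and (v) I would first record two facts valid when $\theta_0<\infty$. (a) By \eqref{wnliu}, $W_n(\theta_0,\nu(\theta_0))\to 0$ a.s.; keeping only the largest summand and using $\nu(\theta_0)=\theta_0\gamma$ with $\gamma:=\nu(\theta_0)/\theta_0$, we have $e^{\theta_0(R_n-n\gamma)}\le W_n(\theta_0,\nu(\theta_0))$, so $R_n-n\gamma\to-\infty$ a.s.; in particular $R_n\le n\gamma$ for all large $n$. (b) By the law of large numbers \eqref{Equ:BRW-SLLN}, $R_n/n\to\gamma$ a.s., where we use the standard identification $\gamma=\inf_{t>0}\nu(t)/t=\nu(\theta_0)/\theta_0$; the latter equality follows from the convexity analysis of $t\mapsto\nu(t)/t$, whose derivative has the sign of $t\nu'(t)-\nu(t)$ and therefore vanishes exactly at $\theta_0$.

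Granting these, part (iv) splits into $a=\theta_0$ and $a>\theta_0$. If $a=\theta_0$, then $b\ge a\nu(\theta_0)/\theta_0=\nu(\theta_0)$, so $W_n(\theta_0,b)=W_n(\theta_0,\nu(\theta_0))\,e^{-n(b-\nu(\theta_0))}\le W_n(\theta_0,\nu(\theta_0))\to 0$. If $a>\theta_0$, set $\delta:=a-\theta_0>0$; for all large $n$ we have $S(v)-n\gamma\le R_n-n\gamma\le 0$ for every $|v|=n$ by (a), hence
\[
W_n(a,a\gamma)=\sum_{|v|=n}e^{\theta_0(S(v)-n\gamma)}\,e^{\delta(S(v)-n\gamma)}\ \le\ \sum_{|v|=n}e^{\theta_0(S(v)-n\gamma)}=W_n(\theta_0,\nu(\theta_0))\ \longrightarrow\ 0 ,
\]
and since $b\ge a\gamma$ the factor $e^{-n(b-a\gamma)}\le 1$ only helps, giving $W_n(a,b)\le W_n(a,a\gamma)\to 0$. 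For part (v), bound below by the largest summand: $W_n(a,b)\ge e^{aR_n-nb}=e^{\,n(aR_n/n-b)}$; by (b), $aR_n/n\to a\gamma> b$ since $b<a\nu(\theta_0)/\theta_0=a\gamma$, so the exponent tends to $+\infty$ and $W_n(a,b)\to\infty$ a.s.

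The only genuinely delicate point is the sub-case of (iv) with $a>\theta_0$ and $a\gamma\le b<\nu(a)$ (note $\nu(a)>a\gamma$ there, since $t\mapsto\nu(t)/t$ is strictly increasing past $\theta_0$): here the naive factorisation through $W_n(a,\nu(a))$ pits a martingale tending to $0$ against an exponential $e^{\,n(\nu(a)-b)}$ blowing up, so one really needs the comparison in the displayed inequality above --- i.e.\ the fact that, beyond the critical index, the entire cloud $\{S(v)-n\gamma\}_{|v|=n}$ eventually sits in $(-\infty,0]$, which is what lets us absorb the factor $e^{\delta(S(v)-n\gamma)}$. Everything else is bookkeeping with the identity $W_n(a,b)=W_n(a,\nu(a))\,e^{\,n(\nu(a)-b)}$ and the a.s.\ limits already established in \eqref{wnliu}.
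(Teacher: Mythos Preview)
Your argument is correct. Parts (i)--(iii) coincide with the paper's proof: both use the factorisation $W_n(a,b)=W_n(a,\nu(a))\,e^{n(\nu(a)-b)}$ together with \eqref{wnliu} and the positivity of $D_a^\infty$.

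For (iv) and (v) you take a genuinely different route. The paper handles (iv) via the elementary inequality $\sum x_i^{\,r}\le(\sum x_i)^r$ (valid for $x_i\ge 0$, $r\ge 1$), applied with $r=a/\theta_0$, yielding $W_n(a,b)\le\bigl(W_n(\theta_0,b\theta_0/a)\bigr)^{a/\theta_0}$ for \emph{every} $n$; your comparison $W_n(a,a\gamma)\le W_n(\theta_0,\nu(\theta_0))$ via $R_n-n\gamma\to-\infty$ is equally valid but only holds eventually. For (v), the paper deliberately avoids importing the classical SLLN \eqref{Equ:BRW-SLLN} and the identification $\gamma=\nu(\theta_0)/\theta_0$; instead it proves a short convexity lemma (Lemma~\ref{convx}) showing that, for fixed $\omega$ and any subsequence, the set of pairs $(c,d)$ with $\limsup_k W_{n_k}(c,d)(\omega)<\infty$ is convex and must therefore lie above the tangent to $\nu$ at $\theta_0$ --- a self-contained argument that extracts (v) purely from the already-established cases (i) and (iii). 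Your route is shorter and more transparent if one is willing to quote Biggins' theorem and the speed formula; the paper's route keeps the analysis entirely internal to the additive martingales, which is consistent with its logical flow since Corollary~\ref{wnlim} (and ultimately the SLLN for $R_n^*$, Theorem~\ref{Thm:SLLN}) is derived \emph{from} this proposition rather than the other way around.
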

To prove this proposition, we use the following elementary result. We provide the proof for sake of completeness. 
\begin{lemma}
	\label{convx}
	Let $f:[0,\infty)\rightarrow \R$ be a continuously differentiable convex function  and $\mathbb{S}$ be a convex subset of $[0,\infty)\times \R$ satisfying
	\begin{itemize}
		\item $(x,y)\in \mathbb{S}$ for all $0<x< x_0$ and $y> f(x)$ and
		\item  $(x,y)\notin \mathbb{S}$ for all $0<x< x_0$ and $y< f(x)$,
	\end{itemize}
	for some $x_0>0$. Then
	\[
	\mathbb{S}\subseteq\left\{ (x,y): y\geq T_{x_0}(x) \right\},
	\]
	where $T_{x_0}(\cdot)$ denotes the  tangent line  to $f$ at $x_0$.
\end{lemma}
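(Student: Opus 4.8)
The plan is to prove Lemma~\ref{convx} as a clean geometric fact about tangent lines to a convex function, and then note how it will be applied to Proposition~\ref{wnprop} through the function $f = \nu$ (or its rescalings). The statement says: if $\mathbb{S}$ is convex, lies above the graph of $f$ on the vertical strip $0 < x < x_0$, and avoids the region strictly below the graph on that same strip, then $\mathbb{S}$ lies entirely on or above the tangent line $T_{x_0}$ to $f$ at $x_0$.

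First I would reduce to a separating-line argument. Since $f$ is continuously differentiable and convex on $[0,\infty)$, the tangent line $T_{x_0}(x) = f(x_0) + f'(x_0)(x - x_0)$ satisfies $T_{x_0}(x) \le f(x)$ for all $x \ge 0$, with equality at $x = x_0$; this is the standard supporting-line property of convex functions. Consequently the closed half-plane $\mathbb{H} := \{(x,y) : y \ge T_{x_0}(x)\}$ is a convex set that contains the epigraph of $f$. The goal becomes: show $\mathbb{S} \subseteq \mathbb{H}$. Suppose not, so there is a point $p = (x_1, y_1) \in \mathbb{S}$ with $y_1 < T_{x_0}(x_1)$; in particular $x_1 \ne x_0$ (since $T_{x_0}(x_0) = f(x_0)$ and points of $\mathbb{S}$ on the line $x = x_0$ would have to be handled by a limiting argument, which I address below).

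Next I would produce a contradiction by convexity of $\mathbb{S}$. The key observation is that for $x$ slightly less than $x_0$ and strictly between $x_1$ and $x_0$, the value $f(x)$ lies strictly below the chord joining $(x_1, T_{x_0}(x_1))$ — wait, more directly: pick any interior point $q = (x_2, y_2)$ with $0 < x_2 < x_0$ and $y_2 > f(x_2)$; such $q$ lies in $\mathbb{S}$ by the first hypothesis. Now consider the segment from $p$ to $q$. By convexity of $\mathbb{S}$ this entire segment lies in $\mathbb{S}$. I would choose $q$ (depending on $p$) so that this segment, or its extension within the strip $0 < x < x_0$, is forced to pass strictly below the graph of $f$ at some point $x^\ast \in (0, x_0)$ — contradicting the second hypothesis. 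Concretely: the line through $p$ lying below $T_{x_0}$ at $x_1$, when compared against the supporting line $T_{x_0}$, has the property that on the side where the graph of $f$ bends away from $T_{x_0}$, one can slide along a segment in $\mathbb{S}$ and dip below $f$. The cleanest way to organize this is: because $T_{x_0}(x) \le f(x)$ everywhere and $p$ is strictly below $T_{x_0}$, the point $p$ is strictly below the graph of $f$ as well if $x_1 \in (0,x_0)$ — immediate contradiction with the second hypothesis. If instead $x_1 \ge x_0$, I take $q = (x_2, f(x_2) + \varepsilon)$ with $x_2 \in (0, x_0)$ and form the segment $[p,q] \subseteq \mathbb{S}$; as this segment crosses the vertical line $x = x_0$ at height $h$, convexity and $y_1 < T_{x_0}(x_1)$ together with $y_2 \to f(x_2)$ force, for $\varepsilon$ small, a crossing point with $x$-coordinate in $(x_2, x_0) \subset (0,x_0)$ whose height is below $f$, using that $f$ lies on or above $T_{x_0}$ while the segment is pulled below $T_{x_0}$ near $p$. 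This is the step that requires a little care with the inequalities, and it is the main obstacle: one must check that the $\varepsilon$-perturbation does not destroy the strict inequality, which follows by continuity since $y_1 < T_{x_0}(x_1)$ is strict.

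Finally I would handle the boundary/limiting cases. If $\mathbb{S}$ contains a point with $x = x_0$ but $y < f(x_0) = T_{x_0}(x_0)$, I perturb: take a point $q \in \mathbb{S}$ with $x$-coordinate in $(0, x_0)$ strictly above $f$ (exists by hypothesis), and a point with $x$-coordinate $x_0$ and $y$-coordinate below $f(x_0)$; the open segment between them has $x$-coordinates in $(x_2, x_0) \subset (0, x_0)$, and near the $x_0$ end it lies below $f$ by continuity of $f$ and the strict inequality at the endpoint — contradicting the second hypothesis. For points of $\mathbb{S}$ with $x \ge x_0$, the argument of the previous paragraph applies. Combining all cases gives $\mathbb{S} \subseteq \{(x,y) : y \ge T_{x_0}(x)\}$, which is the claim. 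I expect the whole proof to be about half a page; the only genuinely delicate point is the quantitative crossing argument, and even that reduces to elementary planar geometry once the supporting-line inequality $T_{x_0} \le f$ is in hand.
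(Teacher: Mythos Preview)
Your approach differs from the paper's. The paper defines the lower boundary $g(x) = \inf\{y : (x,y) \in \mathbb{S}\}$, shows $g$ is convex directly from convexity of $\mathbb{S}$, observes that $g = f$ on $(0, x_0)$, and then uses that for each $x \in (0, x_0)$ the tangent $T_x$ is a supporting line to the convex function $g$, hence $g \geq T_x$ everywhere; letting $x \to x_0^-$ and using continuous differentiability gives $g \geq T_{x_0}$, which is the claim. This avoids any case analysis on the position of a putative bad point.

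Your direct segment--crossing route can be made to work, but there is a genuine gap in the case $x_1 > x_0$. You fix an arbitrary $x_2 \in (0, x_0)$ and claim that for $\varepsilon$ small the segment $[p, q]$ dips below $f$ somewhere in $(x_2, x_0)$. This fails in general. Take $f(x) = x^2$, $x_0 = 1$, $p = (2, 2.9)$ (so indeed $y_1 = 2.9 < T_{x_0}(2) = 3$), and $x_2 = 0.1$: the limiting segment from $p$ to $(0.1, f(0.1))$ lies strictly \emph{above} $f$ on all of $(0.1, 1)$, and taking $\varepsilon > 0$ only raises it further. So your diagnosis that only the $\varepsilon$--perturbation needs checking is incorrect.

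The repair is to also send $x_2 \to x_0^-$. Writing $\delta = T_{x_0}(x_1) - y_1 > 0$ and $\gamma(x_2) = f(x_2) - T_{x_0}(x_2) \geq 0$, the height $h$ of the segment at $x = x_0$ satisfies
\[
h - f(x_0) = -\alpha\,\delta + (1-\alpha)\bigl(\gamma(x_2) + \varepsilon\bigr), \qquad \alpha = \frac{x_0 - x_2}{x_1 - x_2}.
\]
Since $f$ is $C^1$ one has $\gamma(x_2) = o(x_0 - x_2)$ as $x_2 \to x_0^-$, while $\alpha\delta$ is of exact order $x_0 - x_2$; hence for $x_2$ close enough to $x_0$ and then $\varepsilon$ small, $h < f(x_0)$, and the point $(x_0, h) \in \mathbb{S}$ reduces you to your (correct) argument for the case $x_1 = x_0$. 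This is precisely where continuous differentiability enters in your approach, just as it enters the paper's approach via the pointwise limit $T_x \to T_{x_0}$.
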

	\begin{proof}
		We define a function $g:[0,\infty)\rightarrow\bar{\R}$ as
		\[
		g(x)=\inf\left\{ y:(x,y)\in\mathbb{S} \right\}.
		\]
		We first show that $g$ is convex. Take any $x_1$, $x_2$ such that $g(x_1)$, $g(x_2)<\infty$. By definition of $g$, for every $\epsilon>0$, there exist $y_1<g(x_1)+\epsilon$ and $y_2<g(x_2)+\epsilon$ such that $(x_1,y_1),(x_2,y_2)\in\mathbb{S}$. So for any $\alpha\in(0,1)$, $(\alpha x_1+(1-\alpha)x_2,\alpha y_1+(1-\alpha)y_2)\in\mathbb{S}$. Therefore
		\[
		g(\alpha x_1+(1-\alpha)x_2)\leq \alpha y_1+(1-\alpha)y_2< \alpha g(x_1)+(1-\alpha)g(x_2)+\epsilon.
		\]
		As $\epsilon>0$ is arbitrary, we have 
		\[
		g(\alpha x_1+(1-\alpha)x_2)\leq  \alpha g(x_1)+(1-\alpha)g(x_2),
		\]
		and this is true for all  $\alpha\in(0,1)$. Therefore $g$ is convex.
		
		Let $T_x(.)$ be the  tangent line  to $ f $ at $x$. Since $f$ is continuously differentiable, $T_x$ converges pointwise to $T_{x_0}$ as $x\rightarrow x_0$. Note that $g=f$ in $(0,x_0)$. Therefore, for all $x\in(0,x_0)$,  $T_x$ is also the  tangent line  to $g$ at $x$. Since $g$ is convex, we have $g\geq T_x$ for all $x\in(0,x_0)$. Hence, $g\geq T_{x_0}$. This completes the proof.
	\end{proof}

	\begin{proof}[Proof of Proposition~\ref{wnprop}]
		\textit{Proof of (i),(ii) and (iii)}.
			Noting that
			\[W_n(a,b)=W_n(a,\nu(a))\cdot e^{n(\nu(a)-b)}\]
			(i), (ii) and (iii) follows from~\eqref{wnliu}.
		
		\textit{Proof of (iv)}.
			For $a\geq\theta_0$, we have
			\begin{align*}
				W_n(a,b)= \sum_{|v|=n}e^{aS(v)-nb}
				&\leq	\left(\sum_{|v|=n}e^{\left(aS(v)-nb\right)\theta_0/a}\right)^{a/\theta_0}\\[.25cm]
				&=W_n\left(\theta_0,b\theta_0/a\right)^{a/\theta_0}\\[.25cm]
				&= \left(W_n\left(\theta_0,\nu(\theta_0)\right)\cdot e ^{n\left(\nu(\theta_0)-b\theta_0/a\right)}\right)^{a/\theta_0}
			\end{align*}
			Since $W_n(a,b)$ is non-negative, using~\eqref{wnliu}, we get that for $a\geq\theta_0$ and $b\theta_0/a\geq \nu(\theta_0)$,
			\[W_n(a,b)\rightarrow 0 \text{ a.s.} \]
		
		\textit{Proof of (v)}.
			Using (i) and (iii), we know that there exists $\mathcal{N}\subset\Omega$ with $\P(\mathcal{N})=0$ such that for all $\omega\notin\mathcal{N}$ and $(a,b)\in [(0,\theta_0)\times\R] \cap \Q^2$,
			\[
			W_n(a,b)(\omega)
			\rightarrow
			\begin{cases}
				0 & \text{if }  b>\nu(a);\\
				\infty & \text{if }  b<\nu(a).                         
			\end{cases}
			\]
			For	any $\omega\notin\mathcal{N}$ and any subsequence $\{n_k\}$, we define
			\[
			\mathbb{S}\left(\{n_k\},\omega\right)=\left\{(c,d):\limsup_{k\rightarrow\infty}W_{n_k}(c,d)(\omega)<\infty \right\}.
			\]
			Now, suppose $(c_1,d_1),(c_2,d_2)\in\mathbb{S}\left(\{n_k\},\omega\right)$. Then for any $\alpha\in(0,1)$,
			\begin{align*}
				&\limsup_{k\rightarrow\infty} W_{n_k}\left(\alpha c_1+(1-\alpha)c_2,\alpha d_1+(1-\alpha)d_2\right)(\omega)\\[0.25cm]
				=\,\, & \limsup_{k\rightarrow\infty} \sum_{|v|=n_k}\exp\left(\alpha\left[c_1S(v)(\omega)-n_kd_1\right]+(1-\alpha)\left[c_2S(v)(\omega)-n_kd_2\right]\right)\\[0.25cm]
				\leq\,\, & \alpha\left[\limsup_{k\rightarrow\infty}\sum_{|v|=n_k}\exp\left(c_1S(v)(\omega)-n_kd_1\right)\right]
				\\[0.25cm]
				&\qquad\qquad +
				(1-\alpha)\left[\limsup_{k\rightarrow\infty}\sum_{|v|=n_k}\exp\left(c_2S(v)(\omega)-n_kd_2\right)\right]\\[0.25cm]
				=\,\, & \alpha\left[\limsup_{k\rightarrow\infty} W_{n_k}(c_1,d_1)(\omega)\right]+(1-\alpha)\left[\limsup_{k\rightarrow\infty} W_{n_k}(c_2,d_2)(\omega)\right]
				<\infty.
			\end{align*}
			Therefore $\mathbb{S}\left(\{n_k\},\omega\right)$ is convex. As $\Q^2$ is dense in $\R^2$, the conditions in Lemma~\ref{convx} hold for the convex function $\nu$, the convex set $\mathbb{S}\left(\{n_k\},\omega\right)$, and the point $\theta_0$. Thus for any $a\geq\theta_0$ and any $b<a\nu(\theta_0)/\theta_0$, we have $(a,b)\notin\mathbb{S}\left(\{n_k\},\omega\right)$, which implies
			\[
			\limsup_{k\rightarrow\infty}W_{n_k}(a,b)(\omega)=\infty.
			\]
			This holds for all subsequence $\{n_k\}$ and all $\omega\notin\mathcal{N}$. Hence for all $a\geq\theta_0$ and all $b<a\nu(\theta_0)/\theta_0$, we have
			\[ W_n(a,b)\rightarrow \infty \text{ a.s.} \]
		\end{proof}
	
	We recall that $W_n(\theta) =  W_n(\theta,0)$. The following corollary is a simple consequence of Proposition~\ref{wnprop}.
	\begin{cor}
		\label{wnlim}
		Almost surely
		\[
		\frac{\log W_n(\theta)}{n\theta }
		\rightarrow
		\begin{cases} 
			\frac{\nu(\theta)}{\theta} & \text{if } \theta<\theta_0\leq \infty;\\[.25cm]
			\frac{\nu(\theta_0)}{\theta_0} & \text{if }\theta_0\leq \theta<\infty.
                \end{cases}
		\]
	\end{cor}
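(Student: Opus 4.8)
The plan is to derive the logarithmic asymptotics of $W_n(\theta)$ directly from Proposition \ref{wnprop} by a sandwiching argument. Recall that $W_n(\theta) = W_n(\theta, 0)$ and that $W_n(a,b) = W_n(a,\nu(a))\, e^{n(\nu(a)-b)}$, so $W_n(\theta) = W_n(\theta,\nu(\theta))\, e^{n\nu(\theta)}$. Hence
\[
\frac{\log W_n(\theta)}{n\theta} = \frac{\nu(\theta)}{\theta} + \frac{1}{\theta}\cdot\frac{\log W_n(\theta,\nu(\theta))}{n}.
\]
In the case $\theta < \theta_0 \le \infty$, equation \eqref{wnliu} gives $W_n(\theta,\nu(\theta)) \to D_\theta^\infty$ almost surely with $D_\theta^\infty \in (0,\infty)$ a.s., so the second term tends to $0$ and we are done. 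This handles the first branch with essentially no extra work.

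For the case $\theta_0 \le \theta < \infty$ (so in particular $\theta_0 < \infty$), I would show $\frac{1}{n}\log W_n(\theta) \to \nu(\theta_0)\,\theta/\theta_0$ a.s.\ by a two-sided bound, exactly in the spirit of the proof of part (iv) of Proposition \ref{wnprop}. For the \emph{upper} bound, fix $\varepsilon>0$ and pick $b$ with $b\theta_0/\theta > \nu(\theta_0)$ but $b < \nu(\theta_0)\theta/\theta_0 + \varepsilon\theta$; part (iv) (applied with $a=\theta$) gives $W_n(\theta,b)\to 0$ a.s., i.e.\ $W_n(\theta) = W_n(\theta,b)\,e^{nb} \le e^{nb}$ eventually, so $\limsup_n \frac1n\log W_n(\theta) \le b \le \nu(\theta_0)\theta/\theta_0 + \varepsilon\theta$. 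For the \emph{lower} bound, choose $b'$ with $b'\theta_0/\theta < \nu(\theta_0)$, so that part (v) gives $W_n(\theta,b')\to\infty$ a.s., whence $W_n(\theta) = W_n(\theta,b')\,e^{nb'} \ge e^{nb'}$ eventually and $\liminf_n \frac1n\log W_n(\theta) \ge b'$; letting $b' \uparrow \nu(\theta_0)\theta/\theta_0$ gives the matching lower bound. Combining and letting $\varepsilon \downarrow 0$ yields $\frac{1}{n\theta}\log W_n(\theta) \to \nu(\theta_0)/\theta_0$ a.s., which is the second branch.

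The only mild subtlety — and the one point I would be careful about — is the exceptional null set: parts (iv) and (v) of Proposition \ref{wnprop} are stated as ``almost surely'' for each fixed pair $(a,b)$, so to run the $\varepsilon\downarrow 0$ and $b'\uparrow$ limits I take the limits along a rational sequence and intersect the countably many null sets, then use monotonicity of $b\mapsto W_n(\cdot,b)$ (decreasing) to pass to arbitrary real thresholds. This is routine, so I do not expect any genuine obstacle here; the corollary really is an immediate packaging of Proposition \ref{wnprop} together with \eqref{wnliu}.
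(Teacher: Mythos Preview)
Your proposal is correct and is precisely the intended argument: the paper states the corollary as ``a simple consequence of Proposition~\ref{wnprop}'' without giving details, and your sandwiching via parts (ii), (iv), (v) together with the identity $W_n(\theta)=W_n(\theta,b)e^{nb}$ is exactly how that consequence is extracted. The only remark is that for the upper bound you may in fact take $b=\theta\nu(\theta_0)/\theta_0$ directly (part (iv) allows equality), so the $\varepsilon$-argument is not even needed there; otherwise nothing is missing.
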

	\begin{remark}
	\label{Rem:heuristic}
To understand why the limit in Corollary~\ref{wnlim} becomes constant for $\theta\geq\theta_0$, let us consider 
\[
\mathfrak F(\theta)=  \lim_{n\rightarrow\infty} \frac{\log W_n(\theta)}{n\theta }.
\]
 Notice that $\left[ W_n(\theta) \right]^{1/\theta}$ is indeed the $\ell_{\theta}$-norm of the sequence $\{e^{S_v}\}_{|v|=n}$. Thus, it is non-increasing in $\theta$. Therefore, $\mathfrak F(\theta)$ is also non-increasing in $\theta$. Now by the Cauchy–Schwarz inequality, we get that for any $\theta_1,\theta_2>0$,
\[
\left(W_n(\theta_1+\theta_2)\right)^2\leq W_n(2\theta_1)\cdot W_n(2\theta_2).
\]
Since dyadic rational numbers are dense in the real numbers, this gives us that for any $\alpha\in(0,1)$,
\[
W_n(\alpha\theta_1+(1-\alpha)\theta_2)\leq W_n(\theta_1)^{\alpha}\cdot W_n(\theta_2)^{1-\alpha},
\]
 which means that $\log W_n(\theta)$ is convex in $\theta$, and therefore so is $\theta \mathfrak F(\theta)$. Now, for $\theta<\theta_0$, $\mathfrak F(\theta)=\nu(\theta)/\theta$. So by Remark~\ref{Rem:nutheta/theta}, the left-derivative of $\mathfrak F$ is $0$ at $\theta_0$. Hence the right-derivative is greater than or equal to $0$ at $\theta_0$, by convexity of the function $\theta\mapsto\theta \mathfrak F(\theta)$. Using again this convexity, it is now easy to show that $\mathfrak F'(\theta)\geq 0$ for all $\theta\geq \theta_0$, hence $\mathfrak F(\theta)\geq \mathfrak F(\theta_0)$ for all $\theta\geq \theta_0$. But since $\mathfrak F$ is non-increasing, it has to be constant for $\theta\geq \theta_0$.
\end{remark}

\begin{prop}
\label{Prop:Ratio_ynwn}
	For $\theta<\theta_0\leq\infty$ or $\theta=\theta_0<\infty$,
	\begin{equation*}
		\label{ynwn}
		\frac{Y_n^{\mu}(\theta)}{W_n(\theta)} \prob \langle\mu\rangle,
	\end{equation*}
	where $\langle\mu\rangle$ is the mean of $\mu$ and $Y_n^{\mu}(\theta)$ is as defined in Corollary~\ref{Thm:Coupling}.
	\end{prop}

\begin{proof}
Recall that as in~\eqref{wnliu},  for $\theta<\theta_0\leq\infty$, 
		\begin{equation}
			\label{wnliu2}
			W_n(\theta,\nu(\theta))\rightarrow D_{\theta}^{\infty} \text{ a.s.}
		\end{equation}
	For $\theta_0<\infty$, A\"{i}d\'{e}kon and Shi~\cite{AiSh14} have shown that  under the assumptions in Section~\ref{SubSec:Assumptions},
	\begin{equation}
		\label{aidekonshi}
		\sqrt{n}\, W_n(\theta_0,\nu(\theta_0))\prob \left(\frac{2}{\pi\sigma^2}\right)^{1/2} D_{\theta_0}^{\infty},
	\end{equation}
	where $\sigma^2$ and $D_{\theta_0}^{\infty}$ are as mentioned in Section~\ref{SubSubSec:Boundary-Case-Results}. 
Also, 
		Hu and Shi ~\cite{HuSh09} have proved that under the assumptions in Section~\ref{SubSec:Assumptions}, for $\theta_0<\theta<\infty$,
		\begin{equation}
			\label{hushi}
			\frac{1}{\log n}\left(  \log W_n(\theta)- \frac{\nu(\theta_0)}{\theta_0}\theta n \right) \prob -\frac{3\theta}{2\theta_0}.
		\end{equation}	
Now, observe that 
	\[
	\frac{Y_n^{\mu}(\theta)}{W_n(\theta)} - \langle\mu\rangle
	= \sum_{|v|=n}\left(  \frac{e^{\theta S(v)}}{\sum_{|u|=n}e^{\theta S(u)}}   \right)
	\left(  Y_v- \langle\mu\rangle \right).
	\]
	We define
	\[
	M_n(\theta):= \max_{|v|=n}  \frac{e^{\theta S(v)}}{\sum_{|u|=n}e^{\theta S(u)}}
	=\frac{e^{\theta R_n}}{W_n(\theta)}.
	\]
	We recall that $W_n(a) =  W_n(a,0)$. For $\theta\in(0,\theta_0)$, we choose any $\theta_1\in(\theta,\theta_0)$. Then  we get
	\[
	M_n(\theta)
	\leq \frac{\left[ W_n(\theta_1) \right]^{\theta/\theta_1}}{W_n(\theta)}
	\leq \frac{\left[W_n(\theta_1,\nu(\theta_1))\right]^{\theta/\theta_1}\cdot e^{-n\theta\left(\frac{\nu(\theta)}{\theta}-\frac{\nu(\theta_1)}{\theta_1}\right)}}{W_n(\theta,\nu(\theta))}
	\]
	Since $\nu$ is strictly convex, $\nu(\theta)/\theta$ is strictly decreasing for $\theta\in(0,\theta_0)$. Therefore using~\eqref{wnliu2}, we get
	\begin{equation}
		\label{mnlim1}
		M_n(\theta)\rightarrow 0 \text{ a.s.}
	\end{equation}
	For  $\theta=\theta_0<\infty$, we choose any $\theta_2\in(\theta_0,\infty)$. Observe that
	\begin{equation}
		M_n(\theta_0)
		\leq \frac{\left[W_n\left( \theta_2\right) \right] ^{\theta_0/\theta_2}} {W_n(\theta_0 )}
		= \frac{\left[n^{\theta_2/\theta_0}W_n\left( \theta_2, {\theta_2\nu(\theta_0)}/{\theta_0}\right) \right] ^{\theta_0/\theta_2}} {nW_n(\theta_0,       \nu (\theta_0) )}. 
		\label{Equ:mnlim2upperbound}
	\end{equation}
	Now, using~\eqref{aidekonshi}, the denominator on the right-hand side of~\eqref{Equ:mnlim2upperbound} goes to $\infty$ in probability, and by~\eqref{hushi}, the numerator goes to $0$ in probability.
Therefore, we obtain 
		\begin{equation}
		\label{mnlim2}
		M_n(\theta_0)\prob 0.
	\end{equation}
    Let $\mathcal{F}$ be the $\sigma$-field generated by the branching random walk, and $Y\sim\mu$. 
    Then, using~\cite[Lemma~2.1]{BiKy97}, which is a particular case of~\cite[Lemma~2.2]{Kurt72}, we get that for every $0<\varepsilon<1/2$,
		\begin{align*}
			&\P\left( \left.\left| \frac{Y_n^{\mu}(\theta)}{W_n(\theta)} - \langle\mu\rangle \right|>\varepsilon\right| \mathcal{F} \right)\\
			&\leq \frac{2}{\varepsilon^2}
			\left(
			\int_{0}^{\frac{1}{M_n(\theta)}} M_n(\theta)t\cdot\P\left(  \left|Y- \langle\mu\rangle\right|>t  \right)\,\dd t+ 
			\int_{\frac{1}{M_n(\theta)}}^{\infty} \P\left(  \left|Y- \langle\mu\rangle\right|>t \right)\,\dd t
			\right),
		\end{align*}
		which by~\eqref{mnlim1},~\eqref{mnlim2} and dominated convergence theorem, converges to $0$ in probability as $n\rightarrow\infty$. Then by taking expectation and using dominated convergence theorem again, we get 
		\[
		\lim_{n\rightarrow\infty}
		\P\left( \left| \frac{Y_n^{\mu}(\theta)}{W_n(\theta)} - \langle\mu\rangle \right|>\varepsilon \right)=0.
		\]
		 This completes the proof.

\end{proof}

\begin{remark}
\label{Rem:remark_on_Mn}
We note here that Proposition ~\ref{Prop:Ratio_ynwn} holds only when $\theta<\theta_0\leq\infty$ 
or $\theta=\theta_0<\infty$. It is not clear that the conclusion of this proposition holds 
for  the ABC, that is,
when $\theta_0 < \theta < \infty$. In fact, in that case,
$e^{\theta R_n} = \Theta_{\bP}\left( W_n(\theta) \right)$ (follows from~\cite[Theorem~1.1]{Aide13} 
and the proof of Theorem~\ref{Thm:Above-Boundary-Case} given below). Thus, 
~\eqref{mnlim2} does not hold for the ABC. 
\end{remark}

	\section{Proofs of The Main Results}
	\label{Sec:Proofs}
	In this section we prove the main theorems. 
	We start by proving the 
	Centered asymptotic limits: proving first 
	Theorems~\ref{Thm:Boundary-Case} and~\ref{Thm:Below-Boundary-Case} and then
	Theorems~\ref{Thm:Boundary-Case-Version-II} 
	and~\ref{Thm:Below-Boundary-Case-Version-II}.	Proof of Theorem~\ref{Thm:Above-Boundary-Case} is given there after. 
	We then prove the almost sure asymptotic limit, 
	Theorem~\ref{Thm:SLLN}. Finally we end by proving the 
	Brunet-Derrida type results, Theorem~\ref{Thm:Point-Process-Conv}
	and Theorem~\ref{Thm:Point-Process-Weak-Conv}.

	\subsection{Proof of  Theorems ~\ref{Thm:Boundary-Case} and~\ref{Thm:Below-Boundary-Case}} 
	\begin{proof}
Proposition~\ref{Prop:Ratio_ynwn}, together with~\eqref{wnliu2}, gives us that for $\theta<\theta_0\leq\infty$, 
		\begin{equation}
			Y_n^{\mu}(\theta)\cdot e^{-n\nu(\theta)}\prob  D_{\theta}^{\infty} \cdot\langle \mu\rangle.
		\end{equation}
This implies
\begin{equation}
\label{Equ:yn_centered1}
			\log Y_n^{\mu}(\theta)-\log E -n\nu(\theta) \prob  \log D_{\theta}^{\infty} -\log E +\log \langle \mu\rangle,
		\end{equation}
		where $E \sim \Exponential{1}$ and is  independent of $\{Y_v : |v|=n\}_{n\geq 0}$ and also independent of the BRW. 
Similarly, combining Proposition~\ref{Prop:Ratio_ynwn} and~\eqref{aidekonshi}, we obtain that 
\begin{equation}
		Y_n^{\mu}(\theta_0)\cdot\sqrt{n}\cdot e^{-n\nu(\theta_0)}\prob \left(\frac{2}{\pi\sigma^2}\right)^{1/2}\cdot D_{\theta_0}^{\infty}\cdot\langle \mu\rangle,
	\end{equation}
which implies
\begin{equation}
\label{Equ:yn_centered2}
			\log Y_n^{\mu}(\theta_0)-\log E -n\nu(\theta_0)+ \frac{1}{2}\log n \prob  \frac{1}{2}\log  \left(\frac{2}{\pi\sigma^2}\right)+ \log D_{\theta_0}^{\infty} -\log E +\log \langle \mu\rangle.
\end{equation}
Now, combining~\eqref{Equ:yn_centered1} and~\eqref{Equ:yn_centered2} together with Corollary~\ref{Thm:Coupling} gives us the required result.	
	\end{proof}
	
		\subsection{Proof of  Theorems  ~\ref{Thm:Boundary-Case-Version-II} and~\ref{Thm:Below-Boundary-Case-Version-II}} 
		\begin{proof} By using a similar argument as in~\eqref{Equ:Tranforming_Relation_Proof}, we observe that
		 \begin{align*}
				\theta R_n^*(\theta,\mu)-\log Y_n^{\mu}(\theta)
				&=\max_{|v|=n}\left(\theta S(v)+\log Y_v-\log E_v\right)-\log \left(\sum_{|u|=n} e^{\theta S(u)}Y_u\right)\\[.25cm]
				&=-\log\left(\min_{|v|=n} E_v\left( \frac{e^{\theta S(v)}Y_v}{ \sum_{|u|=n} e^{\theta S(u)}Y_u } \right)^{-1}\right)\\[.25cm]
				&\ed -\log E,
			\end{align*}
where $E\sim\Exponential{1}$. Now, using Proposition~\ref{ynwn}, we obtain that for $\theta<\theta_0\leq\infty$ or $\theta=\theta_0<\infty$,
\[
\theta R_n^*(\theta,\mu)-\log W_n(\theta) \cd \log \langle\mu\rangle -\log E.
\]
This, together with~\eqref{wnliu2} and~\eqref{aidekonshi} completes the proof.
\end{proof}
		
	\subsection{Proof of  Theorem~\ref{Thm:Above-Boundary-Case}}

 \begin{proof}
		From~\cite[Theorem~2.3]{Mada17}, it follows that  under our assumptions, 		for $\theta_0 < \theta < \infty$, there exists a positive random variable  
		$\mathfrak{D}_{\theta}$, which 
		may depend on $\theta$,  such that,
		\begin{equation}
			\label{Equ:mada_result} 
			\log W_n(\theta)- \frac{\nu(\theta_0)}{\theta_0}\theta n + \frac{3\theta}{2\theta_0} \log n
\cd 
\log\mathfrak{D}_{\theta} + \frac{\theta}{\theta_0} \log D_{\theta_0}^{\infty},
\end{equation}	
where $\mathfrak{D}_{\theta}$ is independent of $D_{\theta_0}^{\infty}$. 
		Since $W_n(\theta)=Y_n^{\delta_1}(\theta)$, using Corollary~\ref{Thm:Coupling}, we get that for $\theta_0 < \theta < \infty$,
		\begin{equation}
			R_n^*(\theta) - \frac{\nu\left(\theta_0\right)}{\theta_0} n + \frac{3}{2 \theta_0} 
			\log n 
			\cd \frac{1}{\theta_0} \log D_{\theta_0}^{\infty}+ \frac{1}{\theta}\log\mathfrak{D}_{\theta} - \frac{1}{\theta}\log E,
			\label{Equ:H-theta-infty_Evaluate_theta>theta0_1}
		\end{equation}
		where $E\sim\Exponential{1}$. We write the limiting random variable as $H_{\theta}^{\infty}$.
		Now, for $u$ such that $|u|=1$, we define
		\[
		R_{n-1}^{*(u)}(\theta):=\left(\max_{v>u, |v|=n} S(v)-\frac{1}{\theta}\log E_v\right)- S(u).
		\]
		Note that $\{R_{n-1}^{*(u)}(\theta)\}_{|u|=1}$ are i.i.d. and have the same distribution as $R_{n-1}^*(\theta)$. Now,
		\begin{align*}
			R_{n}^*(\theta)
			&=\max_{|u|=1}\left(\max_{v>u, |v|=n}S(v)-\frac{1}{\theta}\log E_v\right)\\
			&=\max_{|u|=1}\left( S(u)+R_{n-1}^{*(u)}(\theta)\right).
		\end{align*}	
		This implies
		\begin{align}
			&R_{n}^*(\theta) - \frac{\nu\left(\theta_0\right)}{\theta_0} n + \frac{3}{2 \theta_0} \log n \nonumber\\
			=\,&\max_{|u|=1}\left( S(u)-\frac{\nu\left(\theta_0\right)}{\theta_0}+R_{n-1}^{*(u)}(\theta) -\frac{\nu\left(\theta_0\right)}{\theta_0}(n-1 ) + \frac{3}{2 \theta_0} \log n  \right).
			\label{Equ:centered_recursion_theta}
		\end{align}	
		For $\theta_0 < \theta < \infty$, let $G_{\theta,n}$ be the distribution function of $R_{n}^*(\theta) - \frac{\nu\left(\theta_0\right)}{\theta_0} n + \frac{3}{2 \theta_0} \log n$, and it converges pointwise to $G_{\theta}$. Now,~\eqref{Equ:centered_recursion_theta} tells us that
		\begin{align}
    G_{\theta,n}(x) = \E\left[ \prod_{|u|=1} G_{\theta,n-1}\left( x -S(u)+\frac{\nu(\theta_0)}{\theta_0}  + \frac{3}{2 \theta_0} \log\left( 1-\frac{1}{n} \right) \right)
    \right],
\end{align}
which implies
	\begin{align}
    G_{\theta}(x) = \E\left[ \prod_{|u|=1} G_{\theta}\left( x -S(u)+\frac{\nu(\theta_0)}{\theta_0}   \right)
    \label{Equ:G-theta-equation}
    \right]
\end{align}
	If we define $g_{\theta}:(0,\infty)\rightarrow [0,1]$ as $g_{\theta}(t)= G_{\theta}(-\log t)$, then from~\eqref{Equ:G-theta-equation} we have 
\begin{align}
   g_{\theta}(t) &= \E\left[ \prod_{|u|=1} g_{\theta}\left(t e^{S(u)-\frac{\nu(\theta_0)}{\theta_0}}\right) 
    \right].
     \label{Equ:g-theta-equation}
\end{align}
Now, if $G_{\theta_0,n}$ is the distribution function of $R_{n}^*(\theta_0) - \frac{\nu\left(\theta_0\right)}{\theta_0} n + \frac{1}{2 \theta_0} \log n$, and it converges pointwise to $G_{\theta_0}$, then by defining $g_{\theta_0}:(0,\infty)\rightarrow [0,1]$ as $g_{\theta_0}(t)= G_{\theta_0}(-\log t)$, a similar argument  gives us  
\begin{align}
   g_{\theta_0}(t) &= \E\left[ \prod_{|u|=1} g_{\theta_0}\left(t e^{S(u)-\frac{\nu(\theta_0)}{\theta_0}}\right) 
    \right].
     \label{Equ:g-theta0-equation}
\end{align}
Since both $g_{\theta}$ and $g_{\theta_0}$ are non-degenerate survival functions,~\eqref{Equ:g-theta-equation} and \eqref{Equ:g-theta0-equation},  in conjunction with~\cite[Theorem~1.1]{AlBiMe12}, imply that $g_{\theta}(t) = g_{\theta_0}(te^{c_{\theta}})$, for some $c_{\theta}\in\R$. Consequently, we get $G_{\theta}(x) = G_{\theta_0}(x-c_{\theta})$, which means 
\begin{align}
   H_{\theta}^{\infty}\disteq H_{\theta_0}^{\infty}+ c_{\theta}.
   \label{Equ:H-theta-infty_Evaluate_theta>theta0_2}
\end{align}
This completes the proof.	
	\end{proof}

\begin{proof}[An alternative proof]
 From~\cite[Theorem~1]{BaRhVa12}, we know
\begin{align}
\E[e^{-t\mathfrak{D}_{\theta}}] = \begin{cases} 
			e^{-\left(a_\theta t\right)^{\theta_0/\theta}} & \text{if } t\geq0;\\
			\infty & \text{if } t<0,
			\end{cases}		
			\label{Equ:mathfrak-D_Laplace}
\end{align}
for some $a_{\theta}>0$.
An alternative  way to derive~\eqref{Equ:H-theta-infty_Evaluate_theta>theta0_2} from~\eqref{Equ:H-theta-infty_Evaluate_theta>theta0_1} is to show that 
\begin{align}
\frac{\mathfrak{D}_{\theta}}{E}\disteq \frac{a_{\theta}}{E^{\theta/\theta_0}}.
\label{Equ:H-theta-infty_Evaluate_theta>theta0_3}
\end{align}
Using an argument similar to that in Example
9.17 of~\cite{StHa04}, together with~\eqref{Equ:mathfrak-D_Laplace}, we obtain that for any $x>0$,
\begin{align*}
\P\left(\frac{a_\theta E}{\mathfrak{D}_{\theta}}>x\right)=\E\left[\P\left(\left. E> \frac{x \mathfrak{D}_{\theta}}{a_\theta} \right|\mathfrak{D}_{\theta}\right)\right]
=\E\left[e^{-\frac{x \mathfrak{D}_{\theta}}{a_\theta}}\right]
=e^{-x^{\theta_0/\theta}}
=\P\left( E^{\theta/\theta_0}> x\right).
\end{align*}
This proves~\eqref{Equ:H-theta-infty_Evaluate_theta>theta0_3}, which implies~\eqref{Equ:H-theta-infty_Evaluate_theta>theta0_2}.
\end{proof}

	\subsection{Proof of  Theorem~\ref{Thm:SLLN}}
	\begin{proof}
		\textit{(Upper bound)}.
		Take any $\theta>0$ and let $\beta=\min(\theta,\theta_0)$. Using Markov's inequality, we get that for every $\epsilon>0$,
		\[
		\P\left( \frac{R_n^*(\theta,\mu)}{n}-\frac{\nu(\beta)}{\beta}>\epsilon \right)\leq
		e^{-n\left(\beta\epsilon+\nu(\beta)\right)/2}\cdot\E\left[e^{\beta R_n^*(\theta,\mu)/2}\right].
		\]
		Now, using Corollary~\ref{Thm:Coupling},  we have
		\begin{align*}
			\E\left[e^{\beta R_n^*(\theta,\mu)/2}\right]
			&=\E\left[ \left(\sum_{|v|=n}e^{\theta S(v)} Y_v\right)^{\beta/(2\theta)} \right]\cdot\E\left[E^{-\beta/(2\theta)}\right]\\[.25cm]
			&\leq \E\left[ \sqrt{\sum_{|v|=n}e^{\beta S(v)} Y_v^{\beta/\theta}} \right]\cdot\Gamma\left(1-\frac{\beta}{2\theta}\right)\\[.25cm]
			&\leq \sqrt{\E\left[ \sum_{|v|=n}e^{\beta S(v)} Y_v^{\beta/\theta} \right]}\cdot\Gamma\left(1-\frac{\beta}{2\theta}\right)\\[.25cm]
			&=\sqrt{e^{n\nu(\beta)}\cdot\langle\mu\rangle_{\beta/\theta}}\cdot\Gamma\left(1-\frac{\beta}{2\theta}\right),
		\end{align*}
		where $\langle\mu\rangle_{\beta/\theta}$ is the $(\beta/\theta)$-th moment of $\mu$. So for every $\epsilon>0$, we have
		\begin{equation}
			\label{bc+}
			\sum_{n=1}^{\infty} \P\left( \frac{R_n^*(\theta,\mu)}{n}-\frac{\nu(\beta)}{\beta}>\epsilon \right)<\infty.
		\end{equation}
		Therefore using the Borel-Cantelli Lemma, we obtain for all $\theta>0$, almost surely
		\begin{equation}
			\label{limsuprnn}
			\limsup_{n\rightarrow\infty} \frac{R_n^*(\theta,\mu)}{n}\leq 
			\begin{cases}
				\frac{\nu(\theta)}{\theta}&\text{if }\theta<\theta_0\leq\infty;\\[.25cm]
				\frac{\nu(\theta_0)}{\theta_0}&\text{if }\theta_0\leq\theta<\infty.
                         \end{cases}
		\end{equation}

		\noindent\textit{(Lower bound).}
		For $u$ such that $|u|=m\leq n$, we define
		\[
		R_{n-m}^{*(u)}(\theta,\mu):=\left(\max_{v>u, |v|=n} S(v)+\frac{1}{\theta}\log (Y_v/E_v)\right)- S(u).
		\]
		Note that $\{R_{n-m}^{*(u)}(\theta,\mu)\}_{|u|=m}$ are i.i.d. and have the same distribution as $R_{n-m}^*(\theta,\mu)$. Now,
		\begin{align*}
			R_{n}^*(\theta,\mu)
			&=\max_{|u|=m}\left(\max_{v>u, |v|=n}S(v)+\frac{1}{\theta}\log (Y_v/E_v)\right)\\
			&=\max_{|u|=m}\left( S(u)+R_{n-m}^{*(u)}(\theta,\mu)\right)\\
			&\geq  S(\tilde{u}_m)+\max_{|u|=m}\left(R_{n-m}^{*(u)}(\theta,\mu)\right), 
		\end{align*}	
		where
		\[
		\tilde{u}_m:=\arg\max_{|u|=m}\left(R_{n-m}^{*(u)}(\theta,\mu)\right).
		\]	
		Now, for any $\epsilon\in(0,1)$ and for $\theta<\theta_0\leq\infty$ or $\theta=\theta_0<\infty$,
		\begin{align*}
			&\P\left( \frac{R_n^*(\theta,\mu)}{n}-\frac{\nu(\theta)}{\theta}<-\epsilon \right)\\
			\leq\,\, & \P\left(S(\tilde{u}_{[\sqrt{n}]})+\max_{|u|=[\sqrt{n}]}\left(R_{n-[\sqrt{n}]}^{*(u)}(\theta,\mu)\right)<n\left(\frac{\nu(\theta)}{\theta}-\epsilon\right)\right) \\
			\leq \,\,&
			\P\left(\max_{|u|=[\sqrt{n}]}\left(R_{n-[\sqrt{n}]}^{*(u)}(\theta,\mu)\right)<n\left(\frac{\nu(\theta)}{\theta}-\frac{\epsilon}{2}\right)\right)
			+
			\P\left(S(\tilde{u}_{[\sqrt{n}]})<-\frac{n\epsilon}{2}\right)\\
			\leq\,\, &
			\E\left[\P\left(R_{n-[\sqrt{n}]}^*(\theta,\mu)<n\left(\frac{\nu(\theta)}{\theta}-\frac{\epsilon}{2}\right)\right)^{N_{[\sqrt{n}]}}\right]+
			e^{-n\epsilon\vartheta/4}\cdot\E\left[e^{-\vartheta S(\tilde{u}_{[\sqrt{n}]})/2}\right].
		\end{align*}
		Now, Corollary~\ref{wnlim}, together with Corollary~\ref{Thm:Coupling} and Proposition~\eqref{ynwn}, implies that  for $\theta<\theta_0$ and also for $\theta=\theta_0<\infty$,
		\[
		\frac{R_{n}^*(\theta,\mu)}{n}\xlongrightarrow{p} \frac{\nu(\theta)}{\theta}.
		\]
		Therefore for all large enough $n$,
		\[
		\P\left(R_{n-[\sqrt{n}]}^*(\theta,\mu)<n\left(\frac{\nu(\theta)}{\theta}-\frac{\epsilon}{2}\right)\right)<\epsilon.
		\]
		Observe, $N_{[\sqrt{n}]} < n$ implies at least $[\sqrt{n}]-\lceil\log_2n\rceil$ many particles have given birth to only one offspring. Therefore
		\[
		\P\left(N_{[\sqrt{n}]} < n \right)\leq \left(\P(N=1)\right)^{[\sqrt{n}]-\lceil\log_2n\rceil}.
		\]
		For the second term, we have
		\[
		\E\left[e^{-\vartheta S(\tilde{u}_{[\sqrt{n}]})/2}\right]
		\leq \E\left[ W_{[\sqrt{n}]}(-\vartheta/2)\right]
		=e^{[\sqrt{n}]\nu(-\vartheta/2)}.
		\]
		Therefore we have for all large enough $n$,
		\begin{align*}
			&\P\left( \frac{R_n^*(\theta,\mu)}{n}-\frac{\nu(\theta)}{\theta}<-\epsilon \right)\\
			\leq \,& \epsilon^n+\left(\P(N=1)\right)^{[\sqrt{n}]-\lceil\log_2n\rceil}
			+e^{-n\epsilon\vartheta/4+[\sqrt{n}]\nu(-\vartheta/2)}.
		\end{align*}
		Since for every $\epsilon\in(0,1)$,
		\begin{equation}
			\label{bc-}
			\sum_{n=1}^{\infty} \P\left( \frac{R_n^*(\theta,\mu)}{n}-\frac{\nu(\theta)}{\theta}<-\epsilon \right)<\infty,
		\end{equation}
		using the  Borel-Cantelli Lemma, we obtain that for $0<\theta<\theta_0$ or $\theta=\theta_0<\infty$,
		\begin{equation}
			\label{liminfrnn1}
			\liminf_{n\rightarrow\infty} \frac{R_n^*(\theta,\mu)}{n}\geq \frac{\nu(\theta)}{\theta} \text{ a.s.}
		\end{equation}
		
	To get an appropriate  lower bound for $\theta_0<\theta<\infty$, we need the 
	following result, the proof of this is given at the end of this proof. 
	
	\begin{prop}
		\label{ynlim}
		For any positively supported probability $\mu$ with finite mean, almost surely
		\[
		\frac{\log Y_n^{\mu}(\theta)}{n\theta }
		\rightarrow
		\begin{cases} 
			\frac{\nu(\theta)}{\theta} & \text{if } \theta<\theta_0\leq\infty;\\[.25cm]
			\frac{\nu(\theta_0)}{\theta_0} & \text{if }\theta_0\leq \theta<\infty.
                  \end{cases}
		\]
	\end{prop}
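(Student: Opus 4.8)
The plan is to obtain matching almost-sure upper and lower bounds for $\tfrac1n\log Y_n^{\mu}(\theta)$ and then read off the stated limits from Corollary~\ref{wnlim}. Throughout I use that only the generation-$n$ weights $\{Y_v\}_{|v|=n}$ enter $Y_n^{\mu}(\theta)$, so that conditionally on the $\sigma$-field $\mathcal{F}$ generated by the branching random walk they are i.i.d.\ with law $\mu$. For the \textbf{upper bound}, since $\bE\big[Y_n^{\mu}(\theta)\big]=\langle\mu\rangle\,\bE[W_n(\theta)]=\langle\mu\rangle\,e^{n\nu(\theta)}<\infty$ for every $\theta>0$ by {\bf (A1)}, Markov's inequality gives $\P\big(Y_n^{\mu}(\theta)>e^{n(\nu(\theta)+\epsilon)}\big)\le\langle\mu\rangle e^{-n\epsilon}$, which is summable in $n$; hence $\limsup_n\tfrac1n\log Y_n^{\mu}(\theta)\le\nu(\theta)$ a.s.\ for all $\theta>0$. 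When $\theta_0\le\theta<\infty$ this is not sharp, and I would instead set $r:=\theta_0/\theta\in(0,1]$ and use the elementary inequality $\sum_i a_i\le\big(\sum_i a_i^{\,r}\big)^{1/r}$ to write
\[
Y_n^{\mu}(\theta)=\sum_{|v|=n}\big(e^{\theta S(v)}Y_v\big)\le\Big(\sum_{|v|=n}e^{\theta_0 S(v)}Y_v^{\,r}\Big)^{1/r}=Y_n^{\mu_r}(\theta_0)^{1/r},
\]
where $\mu_r$ is the law of $Y^{\,r}$, which still has finite mean since $\bE[Y^{\,r}]\le1+\bE[Y]$; applying the previous bound at $\theta_0$ then gives $\limsup_n\tfrac1n\log Y_n^{\mu}(\theta)\le\tfrac{\theta}{\theta_0}\nu(\theta_0)$ a.s.

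For the \textbf{lower bound when $\theta<\theta_0$}, fix $c_0>0$ with $q:=\mu\big([c_0,\infty)\big)>0$ (possible since $\mu\ne\delta_0$), and put $T_n:=\sum_{|v|=n}e^{\theta S(v)}\mathbf 1_{\{Y_v\ge c_0\}}$. Conditionally on $\mathcal{F}$, $T_n$ has mean $qW_n(\theta)$ and variance $q(1-q)W_n(2\theta)\le q(1-q)\,e^{\theta R_n}W_n(\theta)=q(1-q)\,M_n(\theta)\,W_n(\theta)^2$, so Chebyshev's inequality yields
\[
\P\Big(T_n<\tfrac{q}{2}W_n(\theta)\,\Big|\,\mathcal{F}\Big)\le\frac{4(1-q)}{q}\,M_n(\theta).
\]
The proof of~\eqref{mnlim1} in fact shows that $M_n(\theta)$ decays geometrically a.s.\ when $\theta<\theta_0$, so the right-hand side is a.s.\ summable in $n$; by the conditional first Borel--Cantelli lemma, a.s.\ eventually $Y_n^{\mu}(\theta)\ge c_0T_n\ge\tfrac{c_0q}{2}W_n(\theta)$, and therefore $\liminf_n\tfrac1n\log Y_n^{\mu}(\theta)\ge\liminf_n\tfrac1n\log W_n(\theta)=\nu(\theta)$ by Corollary~\ref{wnlim}. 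Combined with the upper bound this settles the case $\theta<\theta_0$.

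The \textbf{lower bound when $\theta_0\le\theta<\infty$ is the main obstacle}: for $\theta\ge\theta_0$ the mass of $W_n(\theta)$ is no longer spread over exponentially many comparable particles, the ratio $W_n(2\theta)/W_n(\theta)^2$ need not be summable, and the second-moment argument above breaks down, so one genuinely needs an input about the many particles near the front. I would fix $\delta\in(0,\nu(\theta_0))$ and invoke the classical theorem of Biggins~\cite{Bigg76} on the exponential growth rate of the occupation numbers: under {\bf (A1)}--{\bf (A3)} there is $\varepsilon=\varepsilon(\delta)>0$ such that, almost surely for all large $n$, the set $G_n:=\{\,|v|=n:\theta_0 S(v)\ge(\nu(\theta_0)-\delta)n\,\}$ satisfies $|G_n|\ge e^{\varepsilon n}$. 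Conditionally on $\mathcal{F}$, the probability that every $v\in G_n$ has $Y_v<c_0$ is $(1-q)^{|G_n|}\le(1-q)^{e^{\varepsilon n}}$ eventually, which is summable; hence a.s.\ eventually some $v\in G_n$ has $Y_v\ge c_0$, and then (using $\theta\ge\theta_0$ and $S(v)>0$)
\[
Y_n^{\mu}(\theta)\ge c_0\,e^{\theta S(v)}\ge c_0\,e^{(\theta/\theta_0)(\nu(\theta_0)-\delta)n}.
\]
Letting $\delta\downarrow0$ gives $\liminf_n\tfrac1n\log Y_n^{\mu}(\theta)\ge\tfrac{\theta}{\theta_0}\nu(\theta_0)$, which together with the upper bound completes the proof; dividing by $n\theta$ recovers the stated limits $\nu(\theta)/\theta$ and $\nu(\theta_0)/\theta_0$. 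The one delicate point is extracting precisely the Biggins-type estimate needed — an exponential lower bound on the number of particles with $S(v)/n$ arbitrarily close to the speed $\nu(\theta_0)/\theta_0$ — in a form valid under our moment hypotheses; the remaining steps are routine first- and second-moment bookkeeping.
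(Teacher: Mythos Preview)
Your argument is essentially correct, but it takes a quite different route from the paper's, and it is worth seeing how the two compare.

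The paper never works with $Y_n^{\mu}(\theta)$ directly. Instead it recycles the summable tail estimates \eqref{bc+} and \eqref{bc-} already obtained for $R_n^*$ in the SLLN proof: since $\theta R_n^*(\theta,\mu)\stackrel{d}{=}\log Y_n^{\mu}(\theta)-\log E$ with $\bE|\log E|<\infty$, Borel--Cantelli transfers those bounds to $Y_n^{\mu}$ and gives the full limit for $\theta\le\theta_0$ together with the upper bound for $\theta>\theta_0$. For the remaining lower bound when $\theta>\theta_0$ the paper repeats verbatim the convexity trick of Proposition~\ref{wnprop}(v) and Lemma~\ref{convx}: the set $\{(a,b):\limsup_k Y^{\mu}_{n_k}(a,b)<\infty\}$ is convex, agrees with $\{b\ge\nu(a)\}$ on $(0,\theta_0)$, and therefore lies above the tangent line $b=a\nu(\theta_0)/\theta_0$. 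This is entirely self-contained; no large-deviation input from Biggins is needed.

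Your approach trades that internal machinery for more standard probabilistic tools: first-moment and $\ell^r$ bounds for the upper half, a conditional second-moment argument exploiting the geometric decay of $M_n(\theta)$ for the lower bound below the boundary, and Biggins' occupation-number estimate for $\theta\ge\theta_0$. That last step is the genuine external dependency you correctly flag as ``delicate''; the paper's convexity argument avoids it completely. Two small cosmetic points: the restriction $\delta\in(0,\nu(\theta_0))$ and the parenthetical ``using $\theta\ge\theta_0$ and $S(v)>0$'' are unnecessary, since $v\in G_n$ already gives $\theta S(v)\ge(\theta/\theta_0)(\nu(\theta_0)-\delta)n$ by multiplying the defining inequality by $\theta/\theta_0>0$, with no sign condition required.
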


	Now observe that,
		\[ \theta R_n^*(\theta,\mu)=\max_{|v|=n}\left( \theta S(v)+\log Y_v-\log E_{v} \right)\geq \max_{|v|=n}\left( \theta S(v)+\log Y_v\right)-\log E_{v_n}, \]
		where 
		\[
		v_n=\arg\max_{|v|=n}\left( \theta S(v)+\log Y_v\right).
		\]
		Observe
		\[
		Y_n^{\mu}(\theta+\theta_0)=\sum_{|v|=n}e^{(\theta+\theta_0)S(v)}Y_v\leq
		W_n(\theta_0)\cdot e^{\max_{|v|=n}\left( \theta S(v)+\log Y_v\right)}.
		\]
		Therefore we have
		\begin{align*}
			\frac{\theta R_n^*(\theta,\mu)}{n}\geq \frac{\log Y_n^{\mu}(\theta+\theta_0)}{n}-\frac{\log W_n(\theta_0)}{n}-\frac{\log E_{v_n}}{n}.
		\end{align*}
		Since  $\E[|\log E_{v_n}|]$ is finite,  the Borel-Cantelli Lemma implies that the last terms on the right hand side converges to $0$ a.s. By Corollary~\ref{wnlim} and Proposition~\ref{ynlim}, the first and the second terms a.s. converges to $ (\theta+\theta_0)\nu(\theta_0)/\theta_0$ and $\nu(\theta_0)$ respectively. Thus whenever $\theta_0<\infty$,  we obtain that for all $\theta>\theta_0$,
		\begin{equation}
			\label{liminfrnn2}
			\liminf_{n\rightarrow\infty} \frac{R_n^*(\theta,\mu)}{n}\geq \frac{\nu(\theta_0)}{\theta_0} \text{ a.s.}
		\end{equation}
		This, together with~\eqref{limsuprnn} and~\eqref{liminfrnn1}, completes the proof.
	\end{proof}
	
	\subsubsection{Proof of  Proposition~\ref{ynlim}}
	\begin{proof}
		Corollary~\ref{Thm:Coupling} says that
		\[\theta R_n^*(\theta,\mu)\xlongequal{ d } \log Y_n^{\mu}(\theta)-\log E .\]
		Since $\E[|\log E|]<\infty$,~\eqref{bc+} and~\eqref{bc-},  together with the  Borel-Cantelli Lemma, imply that for $\theta<\theta_0\leq \infty$ and also for $\theta=\theta_0<\infty$,
		\[
		\frac{\log Y_n^{\mu}(\theta)}{n\theta }
		\rightarrow
		\frac{\nu(\theta)}{\theta} \text{ a.s.}
		\]
		and for $\theta_0<\theta<\infty$, 
		\[
		\limsup_{n\rightarrow\infty} \frac{\log Y_n^{\mu}(\theta)}{n\theta }\leq \frac{\nu(\theta_0)}{\theta_0} \text{ a.s.}
		\]
		So	for any $a>0$ and $b\in\R$, we have almost surely
		\[
		Y_n^{\mu}(a,b): = Y_n^{\mu}(a)\cdot e^{-nb}
		\rightarrow
		\begin{cases} 
			0 & \text{if } a<\theta_0,\, b>\nu(a);\\
			\infty & \text{if } a<\theta_0,\, b<\nu(a);\\
			0 & \text{if } \theta_0<\infty,\,a\geq\theta_0,\, b> a\nu(\theta_0)/\theta_0.
                 \end{cases}
		\]
		Now, the exact similar argument as in the proof of Proposition~\ref{wnprop}{(v)} implies that
		for $\theta_0<\infty$, $a\geq\theta_0$ and $b< a\nu(\theta_0)/\theta_0$,
		\[
		Y_n^{\mu}(a,b)
		\rightarrow \infty \text{ a.s.}
		\]
		Hence for $\theta_0<\theta<\infty$, 
		\[
		\frac{\log Y_n^{\mu}(\theta)}{n\theta }
		\rightarrow
		\frac{\nu(\theta_0)}{\theta_0} \text{ a.s.}
		\]
		This proves the proposition.
	\end{proof}

\subsection{Proof of  Theorem~\ref{Thm:Point-Process-Conv}}	
\label{SubSec:Proof-of-BD-results}
\begin{proof}
R\'{e}nyi’s representation~\cite{Reny53}, together with the generalized version of it by Tikhov (see equation~(3) of Tikhov~\cite{Tikh92}), gives us the following lemma.  
\begin{lemma}
	\label{ExpArray}
	Let $\left\{E_{i,n}:1\leq i\leq m_n,n\geq 1\right\}$ be an array of independent random variables with $E_{i,n}\sim \Exponential{ \lambda_{i,n} }$. Suppose  for all $n\geq1$,
	$\sum_{i=1}^{m_n}\lambda_{i,n}=1$, and $\lim_{n\rightarrow\infty}\max_{i=1}^{m_n}\lambda_{i,n}=0$.
	 Then as $n\rightarrow\infty$, the point process
	 \[  \sum_{i=1}^{m_n} \delta_{E_{i,n}}\xrightarrow{d} \Ni,	 \]
	 where $\Ni$ is a homogeneous Poisson point process on $\R_+$ with intensity $1$.
\end{lemma}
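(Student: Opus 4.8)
The plan is to prove the weak convergence of the point processes by showing convergence of their Laplace functionals. Recall that a sequence of (locally finite) point processes $\Pi_n$ on $\R_+$ converges in distribution to $\Ni$ if and only if $\E\big[e^{-\int f\,d\Pi_n}\big]\to\E\big[e^{-\int f\,d\Ni}\big]$ for every nonnegative continuous $f$ with compact support, and that for the homogeneous intensity-$1$ Poisson process this limit equals $\exp\!\big(-\int_0^\infty(1-e^{-f(t)})\,dt\big)$. Fixing such an $f$, supported in $[0,K]$ say, it therefore suffices to show
\[
\prod_{i=1}^{m_n}\E\big[e^{-f(E_{i,n})}\big]\longrightarrow \exp\!\Big(-\int_0^\infty\big(1-e^{-f(t)}\big)\,dt\Big).
\]
By independence the left-hand side equals $\prod_{i=1}^{m_n}(1-a_{i,n})$, where $a_{i,n}:=\int_0^\infty(1-e^{-f(t)})\,\lambda_{i,n}e^{-\lambda_{i,n}t}\,dt\ge 0$.

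The next step is to control this product. Since $1-e^{-f(t)}\in[0,1]$ and vanishes outside $[0,K]$, one has $a_{i,n}\le\int_0^K\lambda_{i,n}e^{-\lambda_{i,n}t}\,dt=1-e^{-\lambda_{i,n}K}\le K\lambda_{i,n}$; hence $\max_i a_{i,n}\le K\max_i\lambda_{i,n}\to 0$ and $\sum_i a_{i,n}\le K\sum_i\lambda_{i,n}=K$. Using $|\log(1-a)+a|\le 2a^2$ for $|a|\le\tfrac12$, for $n$ large we get $\big|\sum_i\log(1-a_{i,n})+\sum_i a_{i,n}\big|\le 2\sum_i a_{i,n}^2\le 2K\max_i a_{i,n}\to 0$, so $\log\prod_i(1-a_{i,n})=-\sum_i a_{i,n}+o(1)$. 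It remains to identify $\lim_n\sum_i a_{i,n}$. Writing $g_n(t):=\sum_i\lambda_{i,n}e^{-\lambda_{i,n}t}$ and using $|\lambda e^{-\lambda t}-\lambda|=\lambda(1-e^{-\lambda t})\le\lambda^2 t$, we obtain $|g_n(t)-1|=\big|g_n(t)-\sum_i\lambda_{i,n}\big|\le t\sum_i\lambda_{i,n}^2\le t\max_i\lambda_{i,n}$, which tends to $0$ uniformly on $[0,K]$. Since $\sum_i a_{i,n}=\int_0^K(1-e^{-f(t)})g_n(t)\,dt$ and the integrand is bounded by $1$ on $[0,K]$, dominated convergence gives $\sum_i a_{i,n}\to\int_0^K(1-e^{-f(t)})\,dt=\int_0^\infty(1-e^{-f(t)})\,dt$. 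Combining the three observations yields the displayed limit, hence the lemma. (Note that $m_n\to\infty$ is automatic, since $1=\sum_i\lambda_{i,n}\le m_n\max_i\lambda_{i,n}$.)

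There is no serious obstacle here: this is a standard triangular-array Poisson limit, and the only mildly delicate point is that both the linearization error $\sum_i\log(1-a_{i,n})+\sum_i a_{i,n}$ and the pointwise error $g_n(t)-1$ must be driven to $0$ using the hypothesis $\max_i\lambda_{i,n}\to 0$, rather than the individual smallness of any single rate. An alternative route would be to quote a general Poisson approximation theorem for null arrays of point processes (verifying $\P(E_{i,n}\in B)\le\lambda_{i,n}|B|\to 0$ and $\sum_i\P(E_{i,n}\in B)\to|B|$ for intervals $B$), but the Laplace-functional argument above is self-contained and equally short.
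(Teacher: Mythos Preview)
Your proof is correct, but the route differs substantially from the paper's. The paper proceeds by computing explicitly the joint density of the first $k$ order statistics $E_{(1,n)},\dots,E_{(k,n)}$, then changes variables to the spacings $Y_{j,n}=E_{(j,n)}-E_{(j-1,n)}$ and shows that this density converges pointwise to $\exp(-\sum_r y_r)$, i.e.\ to the density of $k$ i.i.d.\ Exponential$(1)$ variables. Scheff\'e's theorem then yields $(E_{(1,n)},\dots,E_{(k,n)})\xrightarrow{d}(Q_{(1)},\dots,Q_{(k)})$, and from this the paper deduces convergence of $\int f\,d\Pi_n$ for compactly supported continuous $f$ by a truncation-and-approximation argument.

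Your Laplace-functional argument is both shorter and more robust: it avoids the explicit density computations and the somewhat delicate combinatorial estimate the paper needs to show $\sum_{(j_1,\dots,j_k)\in\mathcal S(k,n)}\prod_i\lambda_{j_i,n}\to 1$. What the paper's approach buys is a direct identification of the limiting order statistics, which may be conceptually illuminating in the context of extreme-value questions the paper is ultimately interested in; your approach buys brevity and a cleaner quantification of the error in terms of $\max_i\lambda_{i,n}$. Either argument is fully adequate here.
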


Now, let $\Fi$ be the $\sigma$-algebra generated by the branching random walk.  We know that, conditionally on $\Fi$,  $\left\{ E_vW_n(\theta)e^{-\theta S(v)} \right\}$ are independent. Furthermore, conditionally on $\Fi$, $E_vW_n(\theta)e^{-\theta S(v)}$ follows $\Exponential{ \frac{e^{\theta S(v)}}{W_n(\theta)}  }$. Note that
\[ \sum_{|v|=n} \frac{e^{\theta S(v)}}{W_n(\theta)} =1 ,\]
and by~\eqref{mnlim1} and~\eqref{mnlim2}, we also have that  for $\theta<\theta_0\leq\infty$ or $\theta=\theta_0<\infty$,
\[ \max_{|v|=n} \frac{e^{\theta S(v)}}{W_n(\theta)}\prob 0. \]
Therefore by Lemma~\ref{ExpArray}, for any positive integer $k$, Borel sets $B_1,B_2,\ldots,B_k$ and  non-negative integers $t_1,t_2,\ldots,t_k$, we have
\begin{align*}
&\P\left(\left. \sum_{|v|=n} \delta_{E_vW_n(\theta)e^{-\theta S(v)}}(B_1)=t_1,\ldots, \sum_{|v|=n} \delta_{E_vW_n(\theta)e^{-\theta S(v)}}(B_k)=t_k \right| \Fi\right)\\[.25cm]
&\qquad\qquad\qquad\qquad\qquad\qquad\qquad\qquad\qquad\qquad \prob \P\left(\Ni(B_1)=t_1,\ldots,\Ni(B_k)=t_k\right).
\end{align*}
Then, using the dominated convergence theorem, we get
\begin{align*}
	&\P\left( \sum_{|v|=n} \delta_{E_vW_n(\theta)e^{-\theta S(v)}}(B_1)=t_1,\ldots, \sum_{|v|=n} \delta_{E_vW_n(\theta)e^{-\theta S(v)}}(B_k)=t_k \right)\\[.25cm]
	&\qquad\qquad\qquad\qquad\qquad\qquad\qquad\qquad\qquad\qquad \rightarrow \P\left(\Ni(B_1)=t_1,\ldots,\Ni(B_k)=t_k\right).
\end{align*}
or equivalently (see Theorem~11.1.VII of Daley and Vere-Jones~\cite{DaVe08}),
\begin{equation}
\sum_{|v|=n} \delta_{E_vW_n(\theta)e^{-\theta S(v)}}\xrightarrow{d} \Ni. \label{Eqn:Daley-Vere-Jones}
\end{equation}

Now for $\Ni=\sum_{j\geq1}\delta_{\mathfrak z_j}$,  we take $\Yi=\sum_{j\geq1}\delta_{-\log \mathfrak z_j}$. Clearly, $\Yi$ is an inhomogeneous Poisson point process on $\R$ with intensity measure $e^{-x}\,\dd x$. 
Since $-\log(.)$ is continuous and therefore Borel measurable,~\eqref{Eqn:Daley-Vere-Jones} implies that
\begin{equation}
\Ui_n:=\sum_{|v|=n} \delta_{\theta S_v-\log E_v-\log W_n(\theta)}\xrightarrow{d} \Yi. \label{uny}
\end{equation}
To simplify the notations, for all $\theta<\theta_0\leq\infty$, we denote
\[A_n(\theta)= n\nu(\theta)+\log D_{\theta}^{\infty},\]
and for $\theta=\theta_0<\infty$, we denote
\[A_n(\theta_0)=n\nu(\theta_0)-\frac{1}{2}\log n+\log D_{\theta_0}^{\infty}+\frac{1}{2}\log\left(\frac{2}{\pi\sigma^2}\right).\]
Recall that by~\eqref{wnliu2} and~\eqref{aidekonshi}, for $\theta<\theta_0\leq\infty$ or $\theta=\theta_0<\infty$,
\[ A_n(\theta)-\log W_n(\theta)\prob0. \]
Now, take any positive integer $k$, non-negative integers $\{t_i\}_{i=1}^k$, and extended real numbers $\{a_i\}_{i=1}^k$ and $\{b_i\}_{i=1}^k$ with $a_i<b_i$ for all $i$. We choose $\delta \in \left(0, \min_{i=1}^k (b_i-a_i)/2\right)$. Then, we have
\begin{align*}
	&\P\left( \Ui_n\left((a_1-\delta,b_1+\delta)\right)\leq t_1,\ldots,\Ui_n\left((a_k-\delta,b_k+\delta)\right)\leq t_k  \right)\\[.25cm] &\qquad-\P\left(\left|A_n(\theta)-\log W_n(\theta)\right|>\delta \right)\\[.25cm]
	\leq\,\,& \P\left( Z_n(\theta)\left((a_1,b_1)\right)\leq t_1,\ldots,Z_n(\theta)\left((a_k,b_k)\right)\leq t_k  \right)\\[.25cm]
	\leq\,\,& \P\left( \Ui_n\left((a_1+\delta,b_1-\delta)\right)\leq t_1,\ldots,\Ui_n\left((a_k+\delta,b_k-\delta)\right)\leq t_k  \right)\\[.25cm]
	&\qquad +\P\left(\left|A_n(\theta)-\log W_n(\theta)\right|>\delta \right).
\end{align*}
Now, by~\eqref{uny}, we have $\Ui_n\xrightarrow{d}\Yi$. Since $\Yi$ is a  Poisson point process,  it is continuous. Therefore, allowing $n\rightarrow\infty$ and then letting $\delta\rightarrow0$, we obtain
\begin{align*}
&\lim_{n\rightarrow\infty}\P\left( Z_n(\theta)\left((a_1,b_1)\right)\leq t_1,\ldots,Z_n(\theta)\left((a_k,b_k)\right)\leq t_k  \right)\\[.25cm]
&\qquad\qquad\qquad\qquad= \P\left( \Yi\left((a_1,b_1)\right)\leq t_1,\ldots,\Yi\left((a_k,b_k)\right)\leq t_k  \right),
\end{align*}
or equivalently,
$Z_n(\theta)\xrightarrow{d}\Yi$. 
This completes the proof.
\end{proof}

\subsection{Proof of  Theorem~\ref{Thm:Point-Process-Weak-Conv}}
\begin{proof}
	This is a slightly weaker version. It follows from arguments similar to those of the proof of the Theorem~\ref{Thm:Point-Process-Conv}.
\end{proof}

\section*{Appendix}
\setcounter{theorem}{0}
    \renewcommand{\thetheorem}{A.\arabic{theorem}}
     \begin{prop}
      \label{Prop:Equ-1.2}
     Under assumptions {\bf (A1)} and {\bf (A3)}, there exists $q > 0$ such that~\eqref{asmp_rem} holds.
     \end{prop}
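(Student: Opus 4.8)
The plan is to dominate $W(\theta) := \int_{\R} e^{\theta x}\,Z(dx) = \sum_{j=1}^{N} e^{\theta \xi_j}$ pointwise in a form that separates the branching factor $N$ from the exponential factor, and then close the argument with a single application of Hölder's inequality. I would fix once and for all some $q \in (0,p)$, where $p>0$ is the exponent from {\bf (A3)}; this single $q$ will serve for every $\theta \in [0,\infty)$.

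The pointwise estimate I would start from is the trivial bound $W(\theta) \le N\,\max_{1\le j\le N} e^{\theta \xi_j}$, which upon raising both sides to the power $1+q$ (letting the power pass through the maximum, since $t\mapsto t^{1+q}$ is increasing on $[0,\infty)$) gives
\[
W(\theta)^{1+q} \le N^{1+q}\,\max_{1\le j \le N} e^{(1+q)\theta \xi_j}.
\]
It is essential to retain the maximum here rather than replacing it by $\sum_j e^{(1+q)\theta\xi_j}$: the latter would bring back a power of a sum and would require a moment of $W$ of order strictly larger than $1+q$, which is exactly the quantity we are trying to bound.

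Next I would apply Hölder's inequality to the product on the right with the conjugate exponents $r = \frac{1+p}{1+q}>1$ and $r' = \frac{1+p}{p-q}$, chosen so that $(1+q)r = 1+p$ and $\frac1r+\frac1{r'}=1$. Passing the power $r'$ through the maximum, this yields
\[
\E\!\left[W(\theta)^{1+q}\right] \le \E\!\left[N^{1+p}\right]^{\frac{1+q}{1+p}}\ \E\!\left[\,\max_{1\le j\le N} e^{(1+q)r'\theta \xi_j}\,\right]^{\frac{p-q}{1+p}}.
\]
The first factor is finite by {\bf (A3)}. For the second I would bound the maximum by the corresponding sum, $\max_{j} e^{(1+q)r'\theta \xi_j} \le \sum_{j=1}^{N} e^{(1+q)r'\theta \xi_j} = \int_{\R} e^{(1+q)r'\theta x}\,Z(dx)$, whose expectation is $m\big((1+q)r'\theta\big)$; since $(1+q)r'\theta \ge 0$ lies in $(-\vartheta,\infty)$, this is finite by {\bf (A1)}. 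Hence $\E[W(\theta)^{1+q}]<\infty$ for every $\theta\ge 0$ with the same $q$, which is precisely equation~\eqref{asmp_rem}.

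I do not expect a genuine obstacle here, only a point of bookkeeping: the naive attempts — expanding $\big(\sum_j e^{\theta\xi_j}\big)^{1+q}$ term by term, or applying Hölder directly to $N^{q}\sum_j e^{(1+q)\theta\xi_j}$ — all lead either to a circular appeal to a higher moment of $W$, or to the unavailable ``off-diagonal second moment'' $\E\big[\sum_{j\ne k} e^{\theta\xi_j}e^{q\theta\xi_k}\big]$ of the point process. Extracting $N$ through $\sum\le N\max$ is what removes the circularity, because after Hölder the exponent $r'$ is absorbed inside the maximum and the surviving factor collapses to a single evaluation of $m(\cdot)$, which {\bf (A1)} guarantees is finite on all of $[0,\infty)$.
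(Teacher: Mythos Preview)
Your proof is correct and follows essentially the same approach as the paper: bound $W(\theta)\le N\max_j e^{\theta\xi_j}$, separate the two factors by H\"older, and then dominate the maximum by the sum to land on a single value of $m(\cdot)$. The only cosmetic difference is the choice of H\"older exponents: the paper uses the pair $(1+q,(1+q)/q)$ and then requires $(1+q)^2\le 1+p$, whereas you pick $r=(1+p)/(1+q)$ so that $N^{(1+q)r}=N^{1+p}$ matches {\bf (A3)} exactly and any $q\in(0,p)$ works.
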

	\begin{proof}
		Observe that
		\[ \int_{\R}e^{\theta x}\,Z(\dd x)\leq Ne^{\theta\left(\max_{j=1}^N\xi_{j}\right)}, \quad\text{ and }\quad e^{\theta\left(\max_{j=1}^N\xi_{j}\right)}\leq  \int_{\R}e^{\theta x}\,Z(\dd x).\]
		Now, using H\"{o}lder's inequality, we have 
		\begin{align*}
			\,\,\E\left[ \left( \int_{\R}e^{\theta x}\,Z(\dd x)\right)^{1+q} \right]
			\leq & \,\, \E\left[  N^{1+q}\cdot e^{\theta (1+q) \left(\max_{j=1}^N\xi_{j}\right)} \right]\\
			\leq & \left(\E\left[N^{(1+q)^2}\right]\right)^{\frac{1}{1+q}}\cdot
			\left(\E\left[e^{\theta \left(\frac{(1+q)^2}{q}\right) \left(\max_{j=1}^N\xi_{j}\right)}\right]\right)^{\frac{q}{1+q}}\\
			\leq & \left(\E\left[N^{(1+q)^2}\right]\right)^{\frac{1}{1+q}}\cdot
			\left(\E\left[\int_{\R}e^{\theta\left(\frac{(1+q)^2}{q}\right) x}\,Z(\dd x)\right]\right)^{\frac{q}{1+q}}\\
			= & \left(\E\left[N^{(1+q)^2}\right]\right)^{\frac{1}{1+q}}\cdot
			\left(m\left(\theta(1+q)^2/q\right)\right)^{\frac{q}{1+q}}.
		\end{align*}
		Then, by choosing $q$ such that $(1+q)^2\leq1+p$, one gets~\eqref{asmp_rem}.
		\end{proof}

	\begin{prop}
	\label{Prop:Convexity_of_nu}
		Under assumptions {\bf (A1)} and {\bf (A2)}, the function 
		$\theta \mapsto \nu(\theta)$ 
		is strictly convex inside the open interval $(-\vartheta,\infty)$.
	\end{prop}
	\begin{proof}
		From Assumption {\bf (A1)}, we know that 
		\[
		m(\theta):=\E\left[\int_{\R} e^{\theta x}\,Z(\dd x) \right]<\infty,
		\]
		for all $\theta\in(-\vartheta,\infty)$. Therefore using dominated convergence theorem, we have for all $\theta\in(-\vartheta,\infty)$,
		\[
		m'(\theta)=\E\left[\int_{\R} xe^{\theta x}\,Z(\dd x) \right]<\infty,
		\]
		and
		\[
		m''(\theta)=\E\left[\int_{\R} x^2e^{\theta x}\,Z(\dd x) \right]<\infty.
		\]
		From Assumption {\bf (A2)}, we have that $\P(Z(\{t\})=N)<1$ for all $t\in\R$. Therefore for all $t\in\R$,
		\begin{align*}
			& \E\left[\int_{\R} (x-t)^2e^{\theta x}\,Z(\dd x) \right]>0\\[.25cm]
			\Rightarrow\,\, &    \E\left[\int_{\R} x^2e^{\theta x}\,Z(\dd x) \right]-2t\E\left[\int_{\R} xe^{\theta x}\,Z(\dd x) \right]+t^2\E\left[\int_{\R} e^{\theta x}\,Z(\dd x) \right]>0\\[.25cm]
			\Rightarrow \,\,& \E\left[\int_{\R} x^2e^{\theta x}\,Z(\dd x) \right]\cdot\E\left[\int_{\R} e^{\theta x}\,Z(\dd x) \right]>\left(\E\left[\int_{\R} xe^{\theta x}\,Z(\dd x) \right]\right)^2\\[.25cm]
			\Rightarrow \,\,& m''(\theta)m(\theta)>\left(m'(\theta)\right)^2.
		\end{align*}
		Hence we have for all $\theta\in(-\vartheta,\infty)$,
		\[
		\nu''(\theta)=\frac{m''(\theta)m(\theta)-\left(m'(\theta)\right)^2}{\left(m(\theta)\right)^2}>0.
		\]
		This proves the proposition.
	\end{proof}

\section*{Statement on No Conflict of Interest}
We declare that we have no competing interests as defined by Springer, the publisher, 
or other interests that might be perceived to influence the results and/or discussion reported in this paper.

\section*{Data Availability}
This is an article which is based on a theoretical research. We thus 
have no research data to declare related to this work. 

\section*{Acknowledgment}
The authors are grateful to the anonymous  
reviewer for his/her very insightful remarks, which have vastly improved the quality of the exposition.

\bibliographystyle{plain}

\end{document}